\newcommand{\Z}{{\ensuremath{\mathbb{Z}}}}
\theoremstyle{theorem}
\newtheorem{Theorem}{Theorem}
\newtheorem{Corollary}[Theorem]{Corollary}
\newtheorem{Lemma}[Theorem]{Lemma}
\newtheorem{Remark}[Theorem]{Remark}
\newtheorem{Proposition}[Theorem]{Proposition}
\theoremstyle{definition}
\newtheorem{Definition}[Theorem]{Definition}
\newtheorem{Example}[Theorem]{Example}
\newtheorem{Question}[Theorem]{Question}
\title{On sums of $gr$-PI algebras}
\author[P. Fagundes]{Pedro Fagundes}
\address{Department of Mathematics, State University of Campinas, 651 S\'ergio Buarque de Holanda, Cidade Universit\'aria ``Zeferino Vaz'', Bar\~ao Geraldo, 13083-859 Campinas, SP, Brazil, 
		}\email{pedro.fagundes@ime.unicamp.br}
\author[P. Koshlukov]{Plamen Koshlukov}
\address{Department of Mathematics, State University of Campinas, 651 S\'ergio Buarque de Holanda, Cidade Universit\'aria ``Zeferino Vaz'', Bar\~ao Geraldo, 13083-859 Campinas, SP, Brazil,}\email{plamen@unicamp.br}	
\thanks{P.\ Fagundes was supported by Grant 2019/16994-1 and Grant 2022/05256-2, São Paulo Research Foundation (FAPESP). P Koshlukov was partially supported by FAPESP grant No.~2018/23690-6 and by CNPq grant No.~302238/2019-0.}
\subjclass[2020]{16R50, 16W50, 16R99}
\keywords{
PI algebras, Sums of graded algebras, Polynomial identities, Graded codimensions}
\begin{document}

\maketitle

\begin{abstract}
    Let $A=B+C$ be an associative algebra graded by a group $G$, which is a sum of two homogeneous subalgebras $B$ and $C$. We prove that if $B$ is an ideal of $A$, and both $B$ and $C$ satisfy graded polynomials identities, then the same happens for the algebra $A$. We also introduce the notion of graded semi-identity for the algebra $A$ graded by a finite group and we give sufficient conditions on such semi-identities in order to obtain the existence of graded identities on $A$. We also provide an example where both subalgebras $B$ and $C$ satisfy graded identities while $A=B+C$ does not. Thus the theorem proved by K\c{e}pczyk in 2016 does not transfer to the case of group graded associative algebras. A variation of our example shows that a similar statement holds in the case of group graded Lie algebras. We note that there is no known analogue of K\c{e}pczyk's theorem for Lie algebras.
\end{abstract}

\section{\bf Introduction}

If $A$ and $B$ are two associative PI rings (or associative PI algebras over the same field $F$), how can we construct new PI rings ($F$-algebras) starting from the given ones? This question is natural and rather important. On the other hand in such a general setting one can hardly expect a satisfactory answer to it. Below we will speak of algebras but virtually everything will be valid for rings as well. The well known theorem due to Birkhoff states that a class of algebras is a variety (that is, it is defined by polynomial identities) if and only if it is closed under taking subalgebras, homomorphic images, and direct products. The theorem of Birkhoff holds for arbitrary algebraic systems but it is nevertheless a rather general construction, albeit of little practical use. The first nontrivial example of such a construction was given by A. Regev in 1972 in \cite{regev}, he proved that the tensor product of two PI algebras over a field is once again a PI algebra. This theorem has very many important consequences. One of them was obtained by Regev, in the same paper, and it states that the codimension sequence of a PI algebra cannot grow very fast: its growth is limited by an exponential function. 

Another important construction is the sum: if $B$ and $C$ are subrings of a ring $A$ such that $A=B+C$, can one claim that $A$ is PI assuming that $B$ and $C$ are PI? This question is attributed to Beidar and Mikhalev \cite{BMi}, though it first appeared, implicitly, in a paper by Kegel, in the early sixties \cite{Keg}. It was proved in that paper that if $B$ and $C$ are nilpotent then $A$ is also nilpotent. In case both $B$ and $C$ are commutative rings then $A$ need not be commutative: Bahturin and Giambruno \cite{BGi} proved that in this case $A$ is PI and it satisfies the identity $[x,y][z,t]=0$. Here and in what follows $[a,b]=ab-ba$ is the usual commutator (bracket). Felzenszwalb, Giambruno, and Leal in \cite{FGL} studied various conditions on $B$ and $C$ that ensure that $A$ is PI. In \cite{KPu} K\c{e}pczyk and Puczylowski proved that if $B$ is PI and $C$ is nil of bounded index then $A$ is PI. Finally in 2016, K\c{e}pczyk in \cite{Kep} gave the positive answer to the question whether $A$ is PI provided that $B$ and $C$ are PI. 

In this paper we study the group graded version of the above question. Group gradings on algebras represent a major tool in PI theory. Their usage in the study of PI algebras started with the celebrated works of Kemer, see for an account Kemer's monograph \cite{kemer}. There has been significant interest in group gradings on algebras and the corresponding graded identities since then. The interested reader is directed to consult the monograph \cite{eldkoc} and its references for future reading concerning gradings, and graded identities. 

We study the following question.
\begin{Question}\label{question}
Let $A$ be an algebra graded by a group $G$. Assume $B$ and $C$ are homogeneous (in the grading) subalgebras of $A$ such that $A=B+C$. If $B$ and $C$ both satisfy graded polynomial identities, does $A$ satisfy one? 
\end{Question}

We give an example in this paper that shows the above question, as stated, has a negative answer. Therefore one is led to consider a modification of it: What conditions must we impose on $A$, $B$, $C$ and/or the group grading in order to obtain the analogue of K\c{e}pczyk's theorem \cite{Kep} in the group graded case? It seems likely to us that this will require developing further the structure theory of graded rings, which is not the goal of the present paper.

The paper is organized in the following way. We start, in the next section, with recalling some notions needed later on. We consider algebras graded by monoids, and we show, by using the main  theorem from \cite{Kep} that if $A=B+C$ is graded by a monoid $M$, $B$ and $C$ are homogeneous subalgebras, and $B$ and $C$ both satisfy graded identities in neutral variables only then $A$ satisfies a graded identity in neutral variables. We also construct an example of a $\mathbb{Z}_2$-graded algebra $A$ which is a sum of two homogeneous subalgebras $B$ and $C$ where both $B$ and $C$ satisfy graded identities but $A$ does not. Furthermore we show that one can modify slightly the example in order to get $A=B\oplus C$ while the same conclusion holds. In addition to that we provide a similar example for graded Lie algebras. In section 3 we deal with algebras graded by a group $G$. We consider an arbitrary class of $G$-graded algebras that is closed under graded homomorphisms and direct powers. Following ideas from \cite{KPu}, we prove that if every algebra in the class has a nontrivial homogeneous ideal that satisfies a graded identity then every algebra in the class is graded PI. As a consequence we obtain a partial answer to Question \ref{question}: If  $B$ is a homogeneous ideal of $A$ and if both $B$ and $C$ are graded PI then so is $A$. In section 4 we study the behaviour of the graded semi-identities, a notion introduced in the non-graded case in \cite{FGL}. We employ ideas that go back to the celebrated paper by Regev, \cite{regev}, and compute upper bounds for the graded codimensions. This allows us to prove that if $A$ satisfies a graded semi-identity of certain type, and the neutral (in the grading) component of $C$ is PI then $A$ is graded PI. Furthermore we find an upper bound for the degree of a graded identity satisfied by $A$. Finally we comment on the variation of Question \ref{question} for algebras with involution.

\section{\bf Preliminaries}

Let $M$ be a monoid (with the multiplicative notation) and let $V$ be a vector space over a field $F$. A {\it grading} on $V$ by the monoid $M$ is a decomposition of $V$ into a direct sum of vector subspaces $V=\bigoplus_{m\in M} V_{m}$. The subspaces $V_{m}$ are called the {\it homogeneous components} of $V$. An element $v\in V_{m}$ is called a {\it homogeneous element} of degree $m$. A subspace $U$ of $V$ is called {\it homogenous} if one can write $U=\bigoplus_{m\in M}(U\cap V_{m})$.

We will denote by $A$ an associative algebra over $F$. A  grading on the algebra $A$ by a monoid $M$ is defined as in the vector space case above, requiring additionally that $A_{m_{1}}A_{m_{2}}\subset A_{m_{1}m_{2}}$ for all $m_{1}$, $m_{2}\in M$, where $A_{m_{i}}$ is the corresponding homogeneous component of $A$. We define homogeneous subalgebras and homogeneous (left, right, two sided) ideals of $A$ as subalgebras and (left, right, two sided) ideals of $A$ which are homogeneous as subspaces, respectively. By an ideal of $A$ we always mean a two-sided ideal. 

For each $m\in M$ we take $X^{(m)}=\{x_{1}^{(m)},x_{2}^{(m)},\dots\}$ as a set of noncommutative variables. Putting $X=\bigcup_{m\in M}X^{(m)}$, we consider the free associative algebra $F\langle X \rangle$, which has a basis as a vector space given by all noncommutative words in the alphabet $X$. The multiplication on $F\langle X \rangle$ is given by juxtaposition. We can consider an $M$-grading on $F\langle X\rangle$ by defining the homogeneous degree $\deg(x_{i}^{(m)})=m$ for all $x_{i}^{(m)}\in X^{(m)}$ and extending it  to all monomials $\mathbf{m}$ as the product of the homogeneous degrees of the variables that appear in $\mathbf{m}$, in the order that they are occurring. Denoting by $F\langle X \rangle^{(m)}$ the subspace of $F\langle X \rangle$ generated by all monomials of homogeneous degree $m$, we have that $F\langle X \rangle=\bigoplus_{m\in M}F\langle X \rangle^{(m)}$ defines an $M$-grading on $F\langle X \rangle$. The free associative algebra with such $M$-grading will be denoted by $F\langle X|M \rangle$, it is called the {\it free $M$-graded associative algebra}. 

Given a graded algebra $A$, we say that a nonzero polynomial $f\in F\langle X|M \rangle$ is an {\it $M$-graded polynomial identity} for the algebra $A$ if 
\[
    f\in \bigcap \ker(\phi) ,
\]
where the intersection runs over all graded homomorphisms $\phi\colon F\langle X|M \rangle\rightarrow A$. By a graded homomorphism $\phi\colon A\rightarrow B$ between two $M$-graded algebras we mean an algebra homomorphism satisfying $\phi(A_{m})\subset B_{m}$ for all $m\in M$. 

An algebra satisfying some graded polynomial identity will be called a {\it $gr$-PI algebra}. We denote by $Id_{M}(A)$ the set of all $M$-graded polynomial identities of $A$. Note that $Id_{M}(A)$ is an ideal which is invariant under graded endomorphisms, and that every such ideal coincides with the ideal of graded identities for some graded algebra. 

The above definitions and constructions are analogously valid for gradings by a group. 

One may try to reduce the answer of Question \ref{question} to the ordinary case through the K\c{e}pczyk's Theorem \cite[Theorem 5]{Kep}. This can be done by looking for sufficient conditions on the graded identities on $B$ and $C$ which imply the existence of ordinary identities on these subalgebras. As an example of such sufficient condition we recall the following result from \cite{BGR,BCo}.

\begin{Theorem}
Let $G$ be a finite group and let $A$ be a $G$-graded algebra. If the neutral component $A_{1}$ is a PI-algebra, then $A$ is a PI-algebra.
\end{Theorem}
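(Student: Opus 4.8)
\medskip
\noindent\textbf{Proof plan.} The strategy is to bound the $G$-graded codimension sequence of $A$ exponentially and then to read off an ordinary polynomial identity from that bound. Extending scalars we may assume that $F$ is infinite, and we put $m=|G|$. Since $A_1$ is PI, it satisfies a multilinear polynomial identity which, after relabelling the variables and rescaling, we write as $f=x_1x_2\cdots x_d-\sum_{\sigma}c_\sigma\,x_{\sigma(1)}\cdots x_{\sigma(d)}$, the sum being over the non-identity permutations $\sigma\in S_d$; moreover, by Regev's theorem there is a constant $\alpha$ with $c_n(A_1)\le\alpha^n$ for all $n$.

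The core step is the estimate $c_n^{gr}(A)=\dim\bigl(P_n^{gr}/(P_n^{gr}\cap Id_G(A))\bigr)\le\gamma^n$ for a suitable constant $\gamma=\gamma(m,d)$, where $P_n^{gr}$ is the space of multilinear $G$-graded polynomials of degree $n$ and $Id_G(A)$ the ideal of $G$-graded identities of $A$. Fix homogeneous degrees $g_1,\dots,g_n\in G$ for the variables $x_1,\dots,x_n$ and consider a multilinear monomial $w_\sigma=x_{\sigma(1)}\cdots x_{\sigma(n)}$. The partial products $p_0=1$ and $p_j=g_{\sigma(1)}\cdots g_{\sigma(j)}$ $(1\le j\le n)$ of the homogeneous degrees take at most $m$ values in $G$, so their level sets partition $\{0,1,\dots,n\}$ into at most $m$ classes; whenever $i<j$ lie in the same class, the sub-word $x_{\sigma(i+1)}\cdots x_{\sigma(j)}$ of $w_\sigma$ has homogeneous degree $1$ and hence takes values in $A_1$ under every graded substitution. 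Thus, once $n$ is large enough, $w_\sigma$ contains an arbitrarily long consecutive product $B_1B_2\cdots B_k$ of sub-words each taking values in $A_1$, and applying $f$ to any $d$ consecutive of these sub-words shows that, modulo $Id_G(A)$, the monomial $w_\sigma$ equals a linear combination of monomials obtained from it by permuting those sub-words.

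Iterating this reduction in the manner of Regev's rewriting procedure, every $w_\sigma$ becomes congruent modulo $Id_G(A)$ to a combination of ``$d$-reduced'' monomials whose number, for a fixed sequence of degrees, is at most the number of orderings of the $A_1$-blocks avoiding a decreasing chain of length $d$ (bounded by $(d-1)^{2n}$ via the Dilworth-type count already used by Regev) times the number of possible patterns of returns to $1$ of the degree sequence; altogether at most $\gamma_0^n$ for some constant $\gamma_0$. Summing over the $m^n$ choices of $(g_1,\dots,g_n)$ yields $c_n^{gr}(A)\le(m\gamma_0)^n=:\gamma^n$. The linear map $P_n\to P_n^{gr}$ determined by $x_i\mapsto\sum_{g\in G}x_i^{(g)}$ is injective and carries $P_n\cap Id(A)$ into $Id_G(A)$, since any substitution into $A$ decomposes uniquely into its homogeneous components; hence it induces an injection $P_n/(P_n\cap Id(A))\hookrightarrow P_n^{gr}/(P_n^{gr}\cap Id_G(A))$, so $c_n(A)\le c_n^{gr}(A)\le\gamma^n$. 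Since $\dim P_n=n!$ eventually exceeds $\gamma^n$, we obtain $P_n\cap Id(A)\neq 0$ for $n$ large, i.e.\ $A$ satisfies a nonzero multilinear identity; in fact this yields an explicit upper bound for the degree of an identity of $A$.

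I expect the main obstacle to be the exponential estimate $c_n^{gr}(A)\le\gamma^n$: one must check that the Regev-type reordering of the $A_1$-blocks is compatible with the remaining parts of each monomial and, above all, that for each fixed sequence of homogeneous degrees the resulting normal forms number exponentially rather than factorially in $n$. A more structural variant of the whole argument passes to the smash product of $A$ with the dual of the group algebra $FG$: that ring carries a complete set of $m$ orthogonal idempotents whose diagonal Peirce corners are all isomorphic to $A_1$, hence it is PI by the same pigeonhole-and-Regev argument applied to its Peirce decomposition, and being PI transfers back to $A$ through the classical relation between $A$ and this smash product — a route in which the role of the finiteness of $G$ is especially transparent.
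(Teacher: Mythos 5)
Your argument is correct in outline, and it is essentially the standard proof of this result: the paper itself does not prove the theorem but cites it from Bahturin--Giambruno--Riley and Bergen--Cohen, and your pigeonhole on partial products of the homogeneous degrees together with the Regev-type reordering and the $(d-1)^{2n}$ count of $d$-good permutations is precisely the Bahturin--Giambruno--Riley argument --- indeed the two ingredients you use are exactly the ones the paper later imports verbatim (Lemma \ref{l1} on products of $|G|d$ words in a finite group, and the codimension bound of Theorem \ref{Riley}). The only part left schematic is the termination of the rewriting procedure, which needs the usual induction on a lexicographic order of monomials (as carried out in the paper's Lemma \ref{l9} in an analogous setting), but that is routine.
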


As a direct consequence we have the following corollary.

\begin{Corollary}
Let $G$ be a finite group and let $A=B+C$ be an algebra that is the sum of two homogeneous subalgebras. If $B$ and $C$ satisfy graded polynomial identities in neutral variables, then $A$ is a PI-algebra.
\end{Corollary}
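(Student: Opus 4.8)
The plan is to reduce everything to the neutral component $A_1$ and then invoke the Theorem stated just above. First I would note that, since $B$ and $C$ are homogeneous subalgebras with $A=B+C$, we have $A_g=B_g+C_g$ for every $g\in G$; in particular the neutral component decomposes as $A_1=B_1+C_1$, where $B_1=B\cap A_1$ and $C_1=C\cap A_1$ are ordinary subalgebras of $A_1$.

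Next I would translate the graded hypothesis into an ungraded statement about these components. A graded identity in neutral variables for $B$ is a nonzero polynomial $f=f(x_1^{(1)},\dots,x_n^{(1)})\in F\langle X|G\rangle$ all of whose variables have homogeneous degree $1$, and which vanishes under every graded substitution. Since a variable of degree $1$ may be specialized precisely to an element of $B_1$, evaluating $f$ on $B_1$ shows that $f$, read as an element of the ordinary free associative algebra, is a polynomial identity of $B_1$. Hence $B_1$ is a PI-algebra, and by the same argument so is $C_1$.

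Now $A_1=B_1+C_1$ is an associative algebra which is a sum of two PI subalgebras, so by K\c{e}pczyk's theorem \cite[Theorem 5]{Kep} the algebra $A_1$ is PI. Finally, $G$ is finite and the neutral component $A_1$ of the $G$-graded algebra $A$ is PI, so the Theorem above applies and gives that $A$ itself is a PI-algebra, as desired.

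I do not expect a genuine obstacle: the proof is a short chain of reductions and the real mathematical content is entirely imported. The only points needing a line of justification are the homogeneous decomposition $A_1=B_1+C_1$ (immediate from homogeneity of $B$ and $C$) and the observation that a graded identity in neutral variables of $B$ restricts to an ordinary identity of $B_1$ (immediate from the definition of a graded substitution). Everything else is K\c{e}pczyk's theorem and the cited Theorem on finite gradings with PI neutral component.
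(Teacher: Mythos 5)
Your proof is correct and is exactly the argument the paper intends (it states the corollary as a "direct consequence" of the preceding theorem, and its Lemma on the decomposition $A_1=B_1+C_1$ together with K\c{e}pczyk's theorem is spelled out immediately afterwards). Nothing to add.
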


We give below a slight improvement of the corollary above.

\begin{Lemma}\label{l}
Let $M$ be a monoid and let $V$ be an $M$-graded vector space. Let $V=V_{1}+V_{2}$ be a sum of two homogeneous subspaces. Then $V_{m}=V_{1,m}+V_{2,m}$ for each $m\in M$, where $V_{i,m}$ is the homogeneous component of degree $m$ of $V_{i}$, $i=1$, 2.
\end{Lemma}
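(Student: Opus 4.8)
The plan is to establish the two inclusions separately, the nontrivial one being $V_m \subseteq V_{1,m} + V_{2,m}$. For the easy direction, note that since $V_i$ is a homogeneous subspace we have $V_{i,m} = V_i \cap V_m$, so $V_{i,m} \subseteq V_m$ for $i = 1, 2$; as $V_m$ is a subspace, $V_{1,m} + V_{2,m} \subseteq V_m$.

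For the reverse inclusion I would take an arbitrary $v \in V_m$ and use the hypothesis $V = V_1 + V_2$ to write $v = u + w$ with $u \in V_1$ and $w \in V_2$. Because $V_1$ and $V_2$ are homogeneous, I can decompose $u = \sum_{n \in M} u_n$ and $w = \sum_{n \in M} w_n$ as finite sums of their homogeneous components, where $u_n \in V_{1,n}$ and $w_n \in V_{2,n}$. Adding these gives $v = \sum_{n \in M}(u_n + w_n)$, with $u_n + w_n \in V_n$ for each $n$.

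Now I invoke the directness of the grading $V = \bigoplus_{n \in M} V_n$: the expression of $v$ as a sum of homogeneous components is unique, and since $v \in V_m$, every term of degree $n \neq m$ must vanish, so $v = u_m + w_m \in V_{1,m} + V_{2,m}$. This yields $V_m \subseteq V_{1,m} + V_{2,m}$ and completes the proof.

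The only point requiring any attention is precisely this last step, namely passing from ``$v$ is a sum of homogeneous pieces indexed by $M$'' to ``only the degree-$m$ piece is nonzero,'' which is exactly the uniqueness built into the definition of a grading as an internal direct sum; beyond that the argument is entirely routine, and I anticipate no genuine obstacle.
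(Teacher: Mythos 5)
Your proof is correct and follows essentially the same argument as the paper: both decompose $v=v_1+v_2$ into homogeneous components and use the directness of the grading to discard all components of degree different from $m$, concluding $v=v_{1,m}+v_{2,m}$. No issues.
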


\begin{proof}
Let $m\in M$. Since $V_{1,m}=V_{m}\cap V_{1}$ and $V_{2,m}=V_{m}\cap V_{2}$ it follows that $V_{1,m}+V_{2,m}\subset V_{m}$. Reciprocally let $v\in V_{m}$. Hence $v=v_{1}+v_{2}$ for some $v_{1}\in V_{1}$ and $v_{2}\in V_{2}$. Writing 
\[
v_{1}=v_{1,m_{1}}+\cdots+v_{1,m}+\cdots +v_{1,m_{n}} \ \mbox{and} \ v_{2}=v_{2,m_{1}}+\cdots+v_{2,m}+\cdots+ v_{2,m_{n}}
\]
such that $v_{1,m_{i}}\in V_{1,m_{i}}$, $v_{1,m}\in V_{1,m}$, $v_{2,m_{i}}\in V_{2,m_{i}}$, $v_{2,m}\in V_{2,m}$, then 
\[
0=v-v_1-v_2= (v-v_{1,m}-v_{2,m})-(v_{1,m_{1}}+v_{2,m_{1}})-\cdots - (v_{1,m_{n}}+v_{2,m_{n}}).
\]
This implies $v=v_{1,m}+v_{2,m}$. Therefore $V_{m}=V_{1,m}+V_{2,m}$.
\end{proof}

Let $A$ be an algebra graded by a monoid $M$. Assume that $A=B+C$ is a sum of two homogeneous subalgebras. Then Lemma \ref{l} gives us that $A_{1}=B_{1}+C_{1}$. Since $B_{1}$ and $C_{1}$ are subalgebras of $A_{1}$, the K\c{e}pczyk's Theorem leads us to the following result.  

\begin{Theorem}\label{t1}
Let $A=B+C$ be an algebra graded by a monoid $M$ such that $B$ and $C$ are two homogeneous subalgebras. If $B$ and $C$ satisfy polynomial identities in neutral variables, then the same holds for the algebra $A$.
\end{Theorem}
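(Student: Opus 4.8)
The plan is to reduce this statement to Kępczyk's theorem applied to the neutral component $A_1$. First I would invoke Lemma \ref{l} with the $M$-graded vector space $A$ and its decomposition $A = B + C$ into homogeneous subspaces; this yields $A_1 = B_1 + C_1$, where $A_1$, $B_1$, $C_1$ denote the neutral homogeneous components. The key observation is that $B_1$ and $C_1$ are not merely subspaces but \emph{subalgebras} of $A_1$: indeed $B$ and $C$ are homogeneous subalgebras, and since the identity element $1 \in M$ satisfies $1 \cdot 1 = 1$, we have $B_1 B_1 \subseteq B_1$ and likewise for $C_1$. Thus $A_1 = B_1 + C_1$ is genuinely a sum of two subalgebras in the ungraded sense.

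Next I would unpack the hypothesis. To say that $B$ satisfies a graded polynomial identity in neutral variables means there is a nonzero $f \in F\langle X^{(1)}\rangle \subseteq F\langle X | M\rangle$ that vanishes under every graded homomorphism into $B$; but a graded substitution of neutral variables into $B$ is exactly an arbitrary substitution of elements of $B_1$, so $f$ is an ordinary polynomial identity of the algebra $B_1$. Hence $B_1$ is an ordinary PI-algebra, and symmetrically $C_1$ is an ordinary PI-algebra. Applying Kępczyk's theorem \cite[Theorem 5]{Kep} to $A_1 = B_1 + C_1$, we conclude that $A_1$ is an ordinary PI-algebra.

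Finally I would bring the monoid $M$ back into play. The algebra $A$ need not be graded by a \emph{finite} group, so the Bahturin--Giambruno--Regev / Bergen--Cohen type result quoted just before the lemma does not literally apply. However, the identity we need on $A$ can be taken again in neutral variables only: any ordinary polynomial identity $g(x_1, \dots, x_n)$ of the algebra $A_1$, reinterpreted with $x_i$ replaced by neutral graded variables $x_i^{(1)}$, is a nonzero element of $F\langle X | M\rangle$ that vanishes under every graded homomorphism $F\langle X|M\rangle \to A$, because such a homomorphism sends each $x_i^{(1)}$ into $A_1$. Therefore $g$ is an $M$-graded polynomial identity for $A$, and $A$ is $gr$-PI.

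The only genuinely nontrivial input is Kępczyk's theorem itself; everything else is bookkeeping about how neutral-variable graded identities correspond to ordinary identities of the neutral component, so I do not expect any real obstacle beyond being careful that $B_1, C_1$ are subalgebras (which is where homogeneity of $B$ and $C$ is used) and that the resulting identity of $A$ is nonzero as an element of the free graded algebra (which is immediate since it is already nonzero as an ordinary polynomial and the neutral component of $F\langle X|M\rangle$ is a free associative algebra).
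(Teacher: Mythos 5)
Your proposal is correct and follows exactly the paper's route: Lemma \ref{l} gives $A_1 = B_1 + C_1$ as a sum of subalgebras, neutral-variable graded identities of $B$ and $C$ translate into ordinary identities of $B_1$ and $C_1$, Kępczyk's theorem makes $A_1$ PI, and that ordinary identity read in neutral variables is a graded identity of $A$. The extra care you take about $B_1, C_1$ being subalgebras and about nonvanishing in the free graded algebra is sound but just makes explicit what the paper leaves implicit.
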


We finish this section with the construction of an example that shows that in general one can not expect an affirmative answer to Question \ref{question}.
We denote by $D$ the free associative algebra $F\langle X\rangle$ where we assume $X$ an infinite countable set. Define $A=M_2(D)$ the $2\times 2$ matrix algebra with entries from $D$, and set 
\[
A_0 = \begin{pmatrix} D&0\\ 0&D\end{pmatrix}, \qquad A_1=\begin{pmatrix}0&D\\ D&0\end{pmatrix}.
\]
Then clearly $A=A_0\oplus A_1$ is a $\mathbb{Z}_2$-grading on $A$. We define now two subalgebras $B$ and $C$ of $A$:
\[
B= \begin{pmatrix} D&D\\ 0&D\end{pmatrix}, \qquad C=\begin{pmatrix}D&0\\ D&D\end{pmatrix}.
\]
\begin{Lemma}
The subalgebras $B$ and $C$ of $A$ are homogeneous in the grading, $A=B+C$, and both $B$ and $C$ satisfy the graded identity $x_1^{(1)}x_2^{(1)}=0$.
\end{Lemma}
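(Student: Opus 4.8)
The plan is to verify each of the three assertions by a direct computation with $2\times 2$ matrices over $D$; nothing deeper is involved.

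First I would check homogeneity. Computing the intersections of $B$ with the homogeneous components gives $B\cap A_0=\begin{pmatrix} D&0\\ 0&D\end{pmatrix}$ and $B\cap A_1=\begin{pmatrix} 0&D\\ 0&0\end{pmatrix}$, and one observes that their direct sum is exactly $B$. The analogous computation for $C$ yields $C\cap A_0=\begin{pmatrix} D&0\\ 0&D\end{pmatrix}$ and $C\cap A_1=\begin{pmatrix} 0&0\\ D&0\end{pmatrix}$, whose sum is $C$. Hence $B$ and $C$ are homogeneous subspaces; since they are plainly closed under the matrix product (they are, respectively, the upper and the lower triangular matrices with entries in $D$), they are homogeneous subalgebras. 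In particular $B_1=B\cap A_1$ and $C_1=C\cap A_1$ as displayed above.

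Next, for $A=B+C$ I would write an arbitrary element $\begin{pmatrix} a&b\\ c&d\end{pmatrix}\in A$, with $a,b,c,d\in D$, as $\begin{pmatrix} a&b\\ 0&d\end{pmatrix}+\begin{pmatrix} 0&0\\ c&0\end{pmatrix}$, the first summand lying in $B$ and the second in $C$; thus $A=B+C$ (in fact $B\cap C$ is the algebra of diagonal matrices over $D$, although this is not needed). Finally, for the graded identity I would note the elementary multiplication rules $B_1B_1=0$ and $C_1C_1=0$, which follow at once from $\begin{pmatrix} 0&p\\ 0&0\end{pmatrix}\begin{pmatrix} 0&q\\ 0&0\end{pmatrix}=0$ and the symmetric statement for lower triangular matrices. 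If $\phi\colon F\langle X|\mathbb{Z}_2\rangle\to B$ is any graded homomorphism, then $\phi(x_i^{(1)})\in B_1$ for all $i$ by definition, so $\phi(x_1^{(1)}x_2^{(1)})=\phi(x_1^{(1)})\phi(x_2^{(1)})\in B_1B_1=0$; hence $x_1^{(1)}x_2^{(1)}\in\bigcap\ker\phi$, i.e.\ it is a $\mathbb{Z}_2$-graded identity of $B$, and the same argument with $C_1C_1=0$ applies to $C$.

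I do not expect any genuine obstacle: the statement is established by the routine matrix bookkeeping above. The single point deserving a moment's care is the homogeneity claim, namely that $B$ (and $C$) actually decomposes as the direct sum of its intersections with $A_0$ and $A_1$, and does not merely contain that sum; this is immediate from the explicit description of the entries, and one could alternatively deduce it from Lemma~\ref{l}.
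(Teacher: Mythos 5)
Your proof is correct and follows exactly the verification the paper has in mind: the paper's own proof simply states that all assertions are immediate, and your explicit matrix computations (homogeneous decomposition of $B$ and $C$, the upper/strictly-lower splitting for $A=B+C$, and $B_1B_1=C_1C_1=0$) are precisely the routine checks being omitted there.
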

\begin{proof}
All statements are immediate.
\end{proof}

Now we prove that $A$ satisfies no $\mathbb{Z}_2$-graded identities. Observe that $A$, $B$, $C$ are not PI algebras since they contain a free associative algebra as a subalgebra. 

\begin{Proposition}\label{no identity}
The $\mathbb{Z}_2$-graded algebra $A$ satisfies no graded identities.
\end{Proposition}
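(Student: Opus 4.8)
The plan is to show directly that no nonzero $f \in F\langle X \mid \mathbb{Z}_2\rangle$ can vanish under all graded substitutions into $A = M_2(D)$. Write $f$ in the variables $x_i^{(0)}$ (degree $0$) and $x_i^{(1)}$ (degree $1$); say $f$ involves $x_1^{(0)},\dots,x_p^{(0)}$ and $x_1^{(1)},\dots,x_q^{(1)}$. Group $f$ by its homogeneous degree: since $f$ is a graded identity iff each of its homogeneous components is, we may assume $f$ is homogeneous, of degree $0$ or $1$. I would substitute for each neutral variable $x_i^{(0)}$ a diagonal matrix $\operatorname{diag}(a_i, a_i')$ and for each odd variable $x_j^{(1)}$ an off-diagonal matrix $\left(\begin{smallmatrix} 0 & b_j \\ c_j & 0\end{smallmatrix}\right)$, where all the $a_i, a_i', b_j, c_j$ are chosen to be \emph{distinct free generators} of $D = F\langle X\rangle$ (this is why $X$ must be infinite). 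The key point is that $D$ is free, so any word in these chosen generators that is nonzero in $F\langle X\rangle$ survives; there are no relations to kill cancellations.

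The first step is to track the entry-wise effect of these substitutions. A monomial $w$ in $f$, upon such a substitution, becomes a $2\times 2$ matrix whose nonzero entries are a single monomial in the chosen free generators, placed in the $(1,1)$ and $(2,2)$ positions if $\deg w = 0$ and in the $(1,2)$ and $(2,1)$ positions if $\deg w = 1$ — so distinct monomials of $f$ land in fixed matrix positions according to degree. The second step is to argue that distinct monomials of $f$ produce \emph{distinct} words in the free generators: reading off the sequence of generators in the product records the left-to-right sequence of variables of $w$ exactly (for neutral variables one must be slightly careful that a diagonal block of $\operatorname{diag}(a_i,a_i')$ contributes $a_i$ in the top block and $a_i'$ in the bottom block, but since the odd variables force alternation between the two blocks in a predictable way, one recovers the monomial unambiguously). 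Hence in the $(1,1)$ entry (if $\deg f = 0$) or the $(1,2)$ entry (if $\deg f = 1$) the image of $f$ is a nonzero $F$-linear combination of distinct monomials in a free algebra, therefore nonzero.

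The main obstacle — and the only place requiring genuine care — is Step two: ensuring the substitution is \emph{injective on monomials}, i.e.\ that the word read off in the $(1,1)$ or $(1,2)$ block determines the monomial. One must check that a neutral variable and an odd variable cannot be confused (they occupy different kinds of positions: a neutral substitution keeps you in the same diagonal block, an odd one swaps blocks), and that the alternation pattern of blocks induced by tracing a path from the $(1,1)$ corner is rigid enough that the generator appearing at each step pins down which variable was used and which of its two ``copies'' ($a_i$ versus $a_i'$) was read. Once injectivity on monomials is established, freeness of $D$ finishes the argument immediately, since a nontrivial $F$-linear combination of distinct words in $F\langle X\rangle$ is never zero. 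I would also remark that the same substitution shows $A$ is not PI in the ordinary sense, though that is already noted in the text.
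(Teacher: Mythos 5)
Your proposal is correct and follows essentially the same route as the paper: a generic substitution of diagonal matrices $\mathrm{diag}(a_i,a_i')$ for neutral variables and antidiagonal matrices for odd ones, with all entries distinct free generators of $D$, followed by the observation that distinct monomials of $f$ yield distinct words in a fixed entry of the product, so no cancellation can occur. The only cosmetic difference is that the paper first reduces to a multilinear $f$ while you reduce only to a component homogeneous in the grading; both reductions are legitimate and the core generic-element argument is identical.
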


\begin{proof}
Suppose, on the contrary, that $A$ does satisfy a graded identity $f$. Without loss of generality we can assume $f$ multilinear, $f=f(x_1^{(0)},\ldots, x_m^{(0)}, x_1^{(1)},\ldots,x_k^{(1)})$. We shall use a \textsl{generic} argument. We take the elements
\[
a_i=\begin{pmatrix} u_i&0\\ 0&v_i\end{pmatrix}, \qquad b_i=\begin{pmatrix}0&w_i\\ t_i&0\end{pmatrix}, \qquad i\ge 1
\]
in $A$ where $u_i$, $v_i$, $w_i$, $t_i$ are distinct variables in the set $X$. These generic elements multiply as follows:
\begin{align*}
a_i a_j=\begin{pmatrix} u_i u_j&0\\ 0&v_i v_j\end{pmatrix},\qquad &a_ib_j=\begin{pmatrix} 0&u_i w_j\\ v_i t_j&0\end{pmatrix},\\ b_ib_j=\begin{pmatrix} w_i t_j&0\\ 0&t_i w_j\end{pmatrix},\qquad &b_ja_i=\begin{pmatrix} 0&w_j t_i\\ t_j u_i&0\end{pmatrix}.
\end{align*}
Take a monomial $\bm{m}=X_1^{(0)}x_{j_1}^{(1)}X_2^{(0)}x_{j_2}^{(1)}\cdots x_{j_k}^{(1)} X_{k+1}^{(0)}$ where $(j_1,\ldots,j_k)$ is a permutation of $(1,\ldots, k)$, and the $X_i^{(0)}$ are monomials that do not contain variables from $X^{(1)}$ (some of the $X_i^{(0)}$ may be empty). Suppose that $\bm{m}$ participates in $f$ with non-zero coefficient. As $f$ is assumed a graded identity for $A$ then $f(a_1,\ldots, a_m, b_1,\ldots, b_k)=0$ in $A$. 

Let us evaluate $\bm{m}$ on $a_1$, \dots, $a_m$, and $b_1$, \dots, $b_k$. Depending on the parity of $k$ we get a matrix in either $A_0$ or $A_1$. Assume $k$ even. Then at position $(1,1)$ of the resulting matrix we will have an entry of the type
\[
\alpha_1 w_{j_1} \beta_2 t_{j_2} \alpha_3 w_{j_3} \beta_4 t_{j_4} \cdots 
\]
where $\alpha_1$ is the product of the entries $u_i$ at position $(1,1)$ of the matrices $a_i$ that appear in the substitution for $X_1^{(0)}$, in their respective order. Similarly $\beta_2$ is the product of the entries $v_i$ that come from the matrices in the second block, $X_2^{(0)}$, and so on, alternating the $(1,1)$ and $(2,2)$ entries of these blocks consecutively. 

Since we obtain a monomial in the free associative algebra $D$, it must cancel out with the monomial coming from some other term of $f$. But our monomial comes from only one term of $f$, namely from $\bm{m}$. Thus the resulting monomial in $D$ cannot cancel out, and this proves that the coefficient of $\bm{m}$ must be 0. Hence if $k$ is even we are done. When $k$ is odd the argument is analogous, and we omit it.
\end{proof}

Thus we proved that Question \ref{question} has a negative answer in general. This justifies our interest in looking for additional conditions that ensure the positive answer to that question. 

\begin{Remark}\label{example for direct sum}
One could expect that Question \ref{question} has a positive answer if we require $A=B\oplus C$ instead of $A=B+C$. However, even in the direct sum case we have the following example: we consider the $\mathbb{Z}_{2}$-graded algebra $A=A_{0}\oplus A_{1}$ as in the previous example, and we take
\[
B=\begin{pmatrix}
D&D\\
0&0
\end{pmatrix} \ \mbox{and} \ C=\begin{pmatrix}
0&0\\
D&D
\end{pmatrix}.
\]
Clearly $A=B\oplus C$, and both $B$ and $C$ are homogeneous subalgebras satisfying $x_{1}^{(1)}x_{2}^{(1)}=0$.
\end{Remark}

\begin{Remark}
The same two examples given above transfer to the graded Lie case. Indeed, let $A=A_{0}\oplus A_{1}$ as before, and consider $A^{(-)}$, the Lie algebra structure given on $A$ by setting $[a,b]=ab-ba$, for all $a$, $b\in A$. Concerning the first example, we write $A^{(-)}=B^{(-)}+C^{(-)}$, and one can easily see that $B^{(-)}$ and $C^{(-)}$ are homogeneous Lie subalgebras of $A^{(-)}$, both satisfying $[x_{1}^{(1)},x_{2}^{(1)}]=0$. It remains to prove that $A^{(-)}$ does not satisfy $\mathbb{Z}_{2}$-graded Lie identities. Assume the contrary, and consider the embedding of the free $\mathbb{Z}_{2}$-graded Lie algebra into $F\langle X|\mathbb{Z}_{2}\rangle^{(-)}$. Hence, we would have a $\mathbb{Z}_{2}$-graded (associative) identity for $A$, which can not occur in light of Proposition \ref{no identity}.
The example from Remark \ref{example for direct sum} is treated analogously.
\end{Remark}

\section{\bf Sums of $gr$-PI ideals and $gr$-PI subalgebras}

In this section we study graded algebras which are a sum of a gr-PI ideal and a gr-PI subalgebra and we give a partial answer to Question \ref{question}. 

Throughout this section $F$ will denote an arbitrary field, $G$ an arbitrary group and $A$ an associative algebra over $F$ graded by $G$. We recall that the operations on some direct power $\prod A$ of $A$ are given by the operations on $A$ component-wise. We can extend the $G$-grading of $A$ to the algebra $\prod A$ as follows:
\[
\prod A=\bigoplus_{g\in G} \mathbf{A}_{g} \ \mbox{such that} \ \mathbf{A}_{g}=\prod A_{g}
\]
where $A_{g}$ is the homogeneous component of degree $g$ of $A$.

\begin{Lemma}\label{l11}
If $A$ is a $gr$-PI algebra, then $\prod A$ is also a $gr$-PI algebra.
\end{Lemma}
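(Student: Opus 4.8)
The plan is to exhibit an explicit graded identity for $\prod A$, namely the same one satisfied by $A$. Suppose $f = f(x_1^{(g_1)}, \dots, x_n^{(g_n)}) \in F\langle X | G\rangle$ is a nonzero graded polynomial identity for $A$. I claim $f$ is also a graded identity for $\prod A$. The key observation is that evaluation in a direct product is performed coordinatewise, and this is compatible with the grading: an element of $\mathbf{A}_g = \prod A_g$ is precisely a tuple $(a^{(\lambda)})_\lambda$ with each $a^{(\lambda)} \in A_g$, so a graded substitution of homogeneous elements of $\prod A$ into $f$ amounts to a family, indexed by the coordinates $\lambda$, of graded substitutions of homogeneous elements of $A$ into $f$.

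The steps, in order, are as follows. First I would fix a graded homomorphism $\phi \colon F\langle X | G \rangle \to \prod A$; equivalently, fix homogeneous elements $\mathbf{a}_i \in (\prod A)_{g_i}$ to substitute for the variables $x_i^{(g_i)}$. Second, write each $\mathbf{a}_i = (a_i^{(\lambda)})_{\lambda}$ and observe that, because $\mathbf{a}_i$ is homogeneous of degree $g_i$, each coordinate $a_i^{(\lambda)}$ lies in $A_{g_i}$. Third, for each index $\lambda$ the map $x_i^{(g_i)} \mapsto a_i^{(\lambda)}$ extends to a graded homomorphism $\phi_\lambda \colon F\langle X | G\rangle \to A$, and since $f \in Id_G(A)$ we get $\phi_\lambda(f) = 0$ for every $\lambda$. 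Fourth, since the algebra operations in $\prod A$ are componentwise, $\phi(f)$ is the tuple $(\phi_\lambda(f))_\lambda = (0)_\lambda = 0$. As $\phi$ was arbitrary, $f \in Id_G(\prod A)$, and since $f \neq 0$ this shows $\prod A$ is $gr$-PI.

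I do not expect any serious obstacle here; the statement is essentially the graded analogue of the elementary fact that a direct power satisfies every identity of its factor, and the only thing requiring a word of care is verifying that the extended grading on $\prod A$ is set up so that homogeneous elements have homogeneous coordinates — which is immediate from the definition $\mathbf{A}_g = \prod A_g$ given just above. One could phrase the whole argument slightly more abstractly by noting that $\prod A$ is a graded subalgebra of a full direct product and that graded identities pass to graded subalgebras and to arbitrary direct products of copies of the same algebra, but the coordinatewise computation above is the most transparent route.
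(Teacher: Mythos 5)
Your proof is correct and follows exactly the paper's (one-line) argument: the same identity $f$ satisfied by $A$ is verified coordinatewise to be an identity of $\prod A$, using that homogeneous elements of $\prod A$ have homogeneous coordinates by the definition $\mathbf{A}_g = \prod A_g$. You have merely spelled out the details the paper leaves implicit.
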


\begin{proof}
It is enough to note that if $A$ satisfies $f=0$, then $\prod A$ also does. 
\end{proof}

\begin{Lemma}\label{ll}
Suppose  that $A$ is not a gr-PI algebra. Then some direct power of $A$ contains a homogeneous subalgebra that is isomorphic to a free graded associative algebra.
\end{Lemma}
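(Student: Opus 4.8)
The plan is to embed $F\langle X|G\rangle$ into a direct power of $A$ by assembling \emph{all} graded homomorphisms from the free graded algebra into $A$. Recall from the definitions that a polynomial lies in $Id_G(A)$ exactly when it is killed by every graded homomorphism $\phi\colon F\langle X|G\rangle\to A$; thus the hypothesis that $A$ is not $gr$-PI says precisely that $\bigcap_\phi\ker\phi=\{0\}$, where $\phi$ ranges over all such homomorphisms.

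Concretely, I would let $I$ be the set of all graded homomorphisms $\phi\colon F\langle X|G\rangle\to A$ and form the direct power $P=\prod_{i\in I}A$, graded as in the text by $P=\bigoplus_{g\in G}\mathbf{A}_g$ with $\mathbf{A}_g=\prod_{i\in I}A_g$. Define $\Phi\colon F\langle X|G\rangle\to P$ by $\Phi(w)=(\phi(w))_{\phi\in I}$. Since the operations on $P$ are componentwise, $\Phi$ is an algebra homomorphism; and since each $\phi$ is graded, a homogeneous $w$ of degree $g$ has $\phi(w)\in A_g$ for all $\phi$, so $\Phi(w)\in\mathbf{A}_g$. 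Decomposing a general element into homogeneous parts, this shows $\Phi$ is a graded homomorphism and, in particular, that it really takes values in $\bigoplus_g\mathbf{A}_g=P$ and not merely in the (possibly larger) full product.

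The key step is injectivity: by the reformulation above, $\ker\Phi=\bigcap_{\phi\in I}\ker\phi=Id_G(A)=\{0\}$ because $A$ is not $gr$-PI. Hence $\Phi$ is a graded isomorphism onto its image $S=\Phi(F\langle X|G\rangle)\subseteq P$. To conclude I would check that $S$ is a homogeneous subalgebra: the inclusion $\Phi(F\langle X|G\rangle^{(g)})\subseteq S\cap\mathbf{A}_g$ is clear, and conversely if $\Phi(w)\in\mathbf{A}_g$ then writing $w=\sum_h w_h$ with $w_h$ homogeneous of degree $h$ gives $\Phi(w)=\sum_h\Phi(w_h)$ with $\Phi(w_h)\in\mathbf{A}_h$, so $\Phi(w_h)=0$ for $h\ne g$ and, by injectivity, $w=w_g$; thus $S\cap\mathbf{A}_g=\Phi(F\langle X|G\rangle^{(g)})$ and $S=\bigoplus_g(S\cap\mathbf{A}_g)$. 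Therefore $S$ is a homogeneous subalgebra of the direct power $P$ isomorphic, as a $G$-graded algebra, to $F\langle X|G\rangle$.

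I do not expect a genuine obstacle here: this is the graded analogue of the classical universal/generic-element construction, and one could equally pick, for each nonzero $f\in F\langle X|G\rangle$, a single graded homomorphism $\phi_f$ with $\phi_f(f)\ne 0$ and take the product over those. The only points needing a little care are verifying that $\Phi$ lands in the particular graded direct power $\bigoplus_g\mathbf{A}_g$ fixed in the text and that its image is homogeneous — both consequences of $\Phi$ being graded plus injective — and noting that no finiteness hypothesis on $G$ is used anywhere.
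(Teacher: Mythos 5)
Your proof is correct and follows essentially the same route as the paper: the paper indexes the direct power by the nonzero polynomials $h$ and picks one witnessing homomorphism $\phi_h$ per $h$ (the variant you mention at the end), whereas you index by all graded homomorphisms; both give an injective graded map of $F\langle X|G\rangle$ into a graded direct power of $A$. Your extra checks that the image lands in $\bigoplus_g \mathbf{A}_g$ and is homogeneous are fine, and are the same routine verifications the paper leaves implicit.
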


\begin{proof}
Take $H=F\langle X|G \rangle\setminus\{0\}$. By hypothesis, for each $h\in H$ we have $h\notin Id_{G}(A)$. Hence there exists some graded homomorphism $\phi_{h}\colon F\langle X|G \rangle \rightarrow A$ such that $\phi_{h}(h)\neq0$. We define $\psi\colon F\langle X|G \rangle\rightarrow \prod_{h\in H} A$ by $\psi(f)=(\phi_{h}(f))_{h}$. It is easy to check that $\psi$ is a graded homomorphism. Moreover, if $h\in H$ then $\phi_{h}(h)\neq0$ and then $\psi(h)\neq0$. We conclude that $\psi$ is an embedding of $F\langle X|G \rangle$ into $\prod_{h\in H} A$.
\end{proof}

\begin{Lemma}\label{l2}
Let $F\langle X|G \rangle$ be the free graded associative algebra and let $K$ be a homogeneous left ideal of  $F\langle X|G \rangle$. Then $K$ does not satisfy any graded polynomial identity.
\end{Lemma}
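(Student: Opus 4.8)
The plan is to show that a nonzero homogeneous left ideal $K$ of $F\langle X|G\rangle$ contains, for every $n$, enough ``independent'' elements to reproduce any multilinear graded polynomial, so that no nonzero $f$ can vanish on $K$. Concretely, pick a nonzero $w\in K$; since $K$ is homogeneous we may take $w$ to be a homogeneous element, say of degree $g_0$, and by passing to a homogeneous component we may even assume $w$ is a single monomial (a word in the alphabet $X$) whose $G$-degree is $g_0$. Since $K$ is a \emph{left} ideal, $K$ contains $v\cdot w$ for every $v\in F\langle X|G\rangle$; in particular, for each group element $g\in G$ and each variable $x_i^{(g)}$, the product $x_i^{(g)}w$ lies in $K$ and is homogeneous of degree $g g_0$.

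First I would reduce to multilinear identities in the usual way: if $K$ satisfies some nonzero graded identity, it satisfies a nonzero multilinear one, say $f=f(x_1^{(h_1)},\dots,x_n^{(h_n)})=\sum_{\sigma}\alpha_\sigma x_{\sigma(1)}^{(h_{\sigma(1)})}\cdots x_{\sigma(n)}^{(h_{\sigma(n)})}$, where the sum is over those $\sigma\in S_n$ preserving the prescribed degree pattern (so that each monomial is homogeneous of the same degree). Fix a permutation $\tau$ with $\alpha_\tau\neq 0$. The idea is to substitute, for the variable $x_j^{(h_j)}$, a suitable element of $K$ built from $w$ so that exactly one surviving monomial appears. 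Take fresh variables and set, for instance, $x_j^{(h_j)}\mapsto z_j\, w$ where $z_j$ is chosen homogeneous so that $z_j w$ has degree $h_j$ (possible since the grading group acts on degrees by translation; here I use that $g_0^{-1}h_j$ is an available degree and that $X^{(g)}$ is infinite for each $g$, so I can take $z_j=x_{k_j}^{(g_0^{-1}h_j)}$ with the $k_j$, and the variables occurring in $w$, all pairwise distinct). Each such $z_j w$ lies in $K$ because $K$ is a left ideal.

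After this substitution, the monomial of $f$ indexed by $\sigma$ becomes the word $z_{\sigma(1)}w\,z_{\sigma(2)}w\cdots z_{\sigma(n)}w$ in $F\langle X|G\rangle$. Because $w$ is a fixed word and the letters $z_1,\dots,z_n$ are distinct letters not occurring in $w$, these $n!$ words are pairwise distinct elements of the monomial basis of $F\langle X|G\rangle$: reading off the positions of the letters $z_1,\dots,z_n$ recovers $\sigma$. Hence the evaluation of $f$ is $\sum_\sigma \alpha_\sigma\, z_{\sigma(1)}w\cdots z_{\sigma(n)}w$, a nonzero element of $F\langle X|G\rangle$ since at least the coefficient $\alpha_\tau$ is nonzero and no cancellation is possible. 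This contradicts the assumption that $f$ is a graded identity for $K$, and the lemma follows.

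The only point requiring a little care — and the main obstacle — is the bookkeeping on degrees: one must check that, given the prescribed homogeneous degrees $h_1,\dots,h_n$ of the variables of $f$ and the degree $g_0$ of $w$, the substitutions $z_j w$ can be chosen homogeneous of the \emph{correct} degree $h_j$ while keeping all letters distinct. This is automatic because in a group grading the degree map is a bijection under left translation, so $z_j$ of degree $g_0^{-1}h_j$ does the job, and each variable set $X^{(g)}$ is infinite, leaving room to avoid the finitely many letters of $w$ and of the other $z_k$. Everything else is the standard ``distinct free letters cannot cancel'' argument in $F\langle X|G\rangle$.
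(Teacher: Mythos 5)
The main idea (substitute $z_jw$ for $x_j^{(h_j)}$, with $w$ a nonzero homogeneous element of $K$ and the $z_j$ suitably chosen variables, then argue that distinct words in the free algebra cannot cancel) is exactly the paper's strategy. But there is one genuine gap: your reduction ``by passing to a homogeneous component we may even assume $w$ is a single monomial'' is not valid. Homogeneity of $K$ refers only to the $G$-grading, so a homogeneous $w\in K_{g_0}$ is in general a linear combination of \emph{several} distinct words, all of $G$-degree $g_0$, and the individual words need not belong to $K$. For example, the left ideal generated by $x_1^{(g)}+x_2^{(g)}$ is homogeneous but contains neither $x_1^{(g)}$ nor $x_2^{(g)}$. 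Handling $w=w_1+\cdots+w_l$ as a sum of distinct monomials is precisely where the paper's proof spends all its effort: one must check that the $n!\,l^n$ words $z_{\sigma(1)}w_{p_1}\cdots z_{\sigma(n)}w_{p_n}$ arising from the expansion are pairwise distinct. Your device of taking the $z_j$ to be fresh letters not occurring in $w$ actually makes this check easy (the positions of $z_1,\dots,z_n$ recover $\sigma$, and the single word sitting between consecutive $z$'s recovers each $p_j$), and in that respect is cleaner than the paper's argument, which reuses the variables $x_i^{(h_i)}$ and compares monomials by reading common prefixes. So the gap is repairable with one extra paragraph, but as written the proof is incomplete because it silently discards the only nontrivial case.

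Two smaller points. First, the degree bookkeeping has a left/right slip: since $\deg(z_jw)=\deg(z_j)\,g_0$, you need $\deg(z_j)=h_jg_0^{-1}$, not $g_0^{-1}h_j$ (the paper takes $h_i=g_ig^{-1}$); this matters for nonabelian $G$. Second, the lemma is of course read with $K\neq\{0\}$, which both you and the paper assume implicitly when choosing $w$.
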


\begin{proof}
Assume that $K$ satisfies some graded polynomial identity $f(x_{1}^{(g_{1})},\dots,x_{n}^{(g_{n})})$. Without loss of generality we can  assume that $f$ is multilinear of degree $n$. Let $w\in K\setminus\{0\}$  be a homogeneous element such that $\deg(w)=g$. For  $h_{i}=g_{i}g^{-1}$, $i=1$, \dots, $n$, we consider the variables $x_{i}^{(h_{i})}$. Hence  $\deg(x_{i}^{(h_{i})}w)=g_{i}$ and then  $f(x_{1}^{(h_{1})}w,\dots, x_{n}^{(h_{n})}w)=0$.

On the other hand, we write $f$ and $w$ as follows:
\[
f=m_{1}+\cdots + m_{n}, \qquad w=w_{1}+\cdots + w_{l}
\]
as sums of their distinct monomials, respectively. Then 
\begin{equation}\label{ee}
f(x_{1}^{(h_{1})}w,\dots, x_{n}^{(h_{n})}w)=\sum_{i=1}^{n}\sum_{j_{1},\dots,j_{n}=1}^{l} m_{i}(x_{1}^{(h_{1})}w_{j_{1}},\dots,x_{n}^{(h_{n})}w_{j_{n}}).
\end{equation}
We claim that $f(x_{1}^{(h_{1})}w,\dots, x_{n}^{(h_{n})}w)$ is a nonzero polynomial and this will lead us to a contradiction. To this end, it is enough to prove that the monomials in (\ref{ee}) are pairwise distinct.  

For a fixed index $i$, we must have 
\[
\bm{m_{p}}=m_{i}(x_{1}^{(h_{1})}w_{p_{1}},\dots,x_{n}^{(h_{n})}w_{p_{n}})\neq m_{i}(x_{1}^{(h_{1})}w_{q_{1}},\dots,x_{n}^{(h_{n})}w_{q_{n}})=\bm{m_{q}}
\]
provided that the $n$-tuples $(p_1,\ldots, p_n)\ne (q_1,\ldots, q_n)$. Indeed, without loss of generality we may assume $p_{1}\neq q_{1}$ and $m_{1}=x_{1}^{(g_{1})}\cdots x_{n}^{(g_{n})}$. Then we write 
\[
\bm{m_{p}}=x_{1}^{(g_{1})}w_{p_{1}}w^{\prime} \ \mbox{and} \ \bm{m_{q}}=x_{1}^{(g_{1})}w_{q_{1}}w^{\prime\prime}
\]
and since $w_{p_{1}}\neq w_{q_{1}}$, then $\bm{m_{p}}\neq \bm{m_{q}}$.

It remains to analyse the monomials 
\[
\bm{m_{p}}= m_{i}(x_{1}^{(h_{1})}w_{p_{1}},\dots,x_{n}^{(h_{n})}w_{p_{n}}) \ \mbox{and} \ \bm{m_{q}}=m_{j}(x_{1}^{(h_{1})}w_{q_{1}},\dots,x_{n}^{(h_{n})}w_{q_{n}})
\]
for $i\neq j$. We start by rewriting the monomials $m_{i}(x_{1}^{(g_{1})},\dots,x_{n}^{(g_{n})})=m x_{i}^{(g_{i})}m^{\prime}$ and $m_{j}(x_{1}^{(g_{1})},\dots,x_{n}^{(g_{n})})=m x_{j}^{(g_{j})}m^{\prime\prime}$, where $m$ is the (possibly empty) common monomial at the beginning of the monomials $m_{i}$ and $m_{j}$. Therefore we suppose that the monomial $\bm{m_{p}}$ starts with $m(x_{1}^{(h_{1})}w_{p_{1}},\dots,x_{n}^{(h_{n})}w_{p_{n}})x_{i}^{(g_{i})}w_{p_{i}}$ and $\bm{m_{q}}$ starts with \linebreak  $m(x_{1}^{(h_{1})}w_{q_{1}},\dots,x_{n}^{(h_{n})}w_{q_{n}})x_{j}^{(g_{j})}w_{q_{j}}$. Since $x_{i}^{(g_{i})}\neq x_{j}^{(g_{j})}$, then we must have $\bm{m_{p}}\neq \bm{m_{q}}$, even if $m(x_{1}^{(h_{1})}w_{p_{1}},\dots,x_{n}^{(h_{n})}w_{p_{n}})=m(x_{1}^{(h_{1})}w_{q_{1}},\dots,x_{n}^{(h_{n})}w_{q_{n}})$.
\end{proof}

Before stating the main theorem of this section, we recall that a graded algebra $A$ is called $gr$-prime if the product of two nonzero homogeneous ideals of $A$ is still nonzero.

\begin{Theorem}\label{t1}
Let $\mathcal{A}$ be a class of $G$-graded $F$-algebras closed under graded homomorphic images and direct powers. Assume that every gr-prime algebra in $\mathcal{A}$ has a nonzero homogeneous ideal satisfying some graded polynomial identity. Then $\mathcal{A}$ is a class of gr-PI algebras.
\end{Theorem}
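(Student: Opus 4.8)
The plan is to argue by contraposition: assuming that some algebra $A\in\mathcal{A}$ is not $gr$-PI, I will exhibit inside $\mathcal{A}$ a $gr$-prime algebra admitting no nonzero homogeneous $gr$-PI ideal, contradicting the hypothesis. First I would pass to a direct power: by Lemma~\ref{ll}, some direct power $B=\prod A$ contains a homogeneous subalgebra $R$ isomorphic, as a $G$-graded algebra, to the free graded associative algebra $F\langle X|G\rangle$, and $B\in\mathcal{A}$ since $\mathcal{A}$ is closed under direct powers. So I may replace $A$ by $B$ and assume from the outset that $A$ contains such an $R$; after this reduction the non-$gr$-PI hypothesis is no longer needed, only the presence of $R$.

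The heart of the argument is the following choice of ideal. Using Zorn's lemma, let $P$ be a homogeneous ideal of $A$ maximal with respect to $P\cap R=0$; Zorn applies smoothly because the union of a chain of homogeneous ideals each meeting $R$ trivially is again a homogeneous ideal meeting $R$ trivially. Set $\overline{A}=A/P$, which belongs to $\mathcal{A}$ as a graded homomorphic image of $A$. I claim $\overline{A}$ is $gr$-prime: if $\overline{J_1}\,\overline{J_2}=0$ for nonzero homogeneous ideals $\overline{J_1},\overline{J_2}$ of $\overline{A}$, their preimages $\widetilde{J_1},\widetilde{J_2}$ in $A$ are homogeneous ideals strictly containing $P$, whence $\widetilde{J_i}\cap R\neq 0$ by maximality; these are nonzero homogeneous ideals of $R$ whose product lies in $\widetilde{J_1}\widetilde{J_2}\cap R\subseteq P\cap R=0$, which is impossible because $R\cong F\langle X|G\rangle$ has no zero divisors. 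Moreover, writing $\overline{R}$ for the image of $R$ in $\overline{A}$, the condition $R\cap P=0$ makes the quotient map restrict to a graded isomorphism $R\to\overline{R}$, so $\overline{R}\cong F\langle X|G\rangle$; and every nonzero homogeneous ideal $J$ of $\overline{A}$ meets $\overline{R}$ nontrivially, since its preimage $\widetilde{J}$ strictly contains $P$, so $\widetilde{J}\cap R\neq 0$ maps injectively into $J\cap\overline{R}$.

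To finish, I apply the hypothesis of the theorem to the $gr$-prime algebra $\overline{A}\in\mathcal{A}$: it has a nonzero homogeneous ideal $J$ satisfying some graded polynomial identity. By the previous paragraph $J\cap\overline{R}$ is a nonzero homogeneous ideal — in particular a nonzero homogeneous left ideal — of $\overline{R}\cong F\langle X|G\rangle$, so Lemma~\ref{l2} forbids it from satisfying any graded identity; yet $J\cap\overline{R}$ is a homogeneous subalgebra of the $gr$-PI algebra $J$, hence is $gr$-PI. This contradiction proves the theorem. I expect the only genuinely non-obvious point to be the choice of $P$ — maximal subject to $P\cap R=0$, rather than subject to a condition phrased directly in terms of identities — since this is precisely what simultaneously forces $\overline{A}$ to be $gr$-prime, keeps a copy of the free graded algebra inside it, and makes that copy ``visible'' inside every nonzero homogeneous ideal, which is exactly what is needed to bring Lemma~\ref{l2} into play. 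The remaining steps (homogeneity of the intersections involved, that $F\langle X|G\rangle$ is a domain, and that homogeneous subalgebras inherit graded identities) are routine.
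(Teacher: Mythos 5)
Your proposal is correct and takes essentially the same route as the paper's proof: embed $F\langle X|G\rangle$ into a direct power via Lemma~\ref{ll}, choose by Zorn's lemma a homogeneous ideal maximal with respect to meeting that copy trivially, check that the quotient is gr-prime (using that the free algebra is a domain), and derive a contradiction with Lemma~\ref{l2}. The only cosmetic difference is that you exhibit the forbidden gr-PI left ideal as $J\cap\overline{R}$, whereas the paper uses $\overline{K}\overline{x}$ for a homogeneous $x\in L\cap K$; both steps are immediate.
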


\begin{proof}
Assume that some algebra $A\in \mathcal{A}$ is not gr-PI.  
Then Lemma \ref{ll} gives us the existence of some homogeneous subalgebra $K$ of $\prod A$ which is isomorphic to $F\langle X|G \rangle$. By the Zorn Lemma, there exists a homogeneous ideal $I$ of $\prod A$ that is maximal with respect to the property $K\cap I=\{0\}$. Since $\mathcal{A}$ is closed under  homomorphic images and direct powers then $\bar{A}=(\prod A)/I\in\mathcal{A}$. We claim that $\bar{A}$ is a gr-prime algebra. In fact, let $J_{1}/I$ and $J_{2}/I$ be nonzero homogeneous ideals of $\bar{A}$. Hence there exist $x\in K\cap J_{1}\setminus\{0\}$ and $y\in K\cap J_{2}\setminus \{0\}$, and then $0\neq (x+I)(y+I)\in (J_{1}/I)(J_{2}/I)$, which proves our claim. 
Therefore $\bar{A}$ has a nonzero homogeneous ideal $\bar{L}=L/I$ satisfying a graded polynomial identity. Since $\bar{L}\neq0$, the maximality of $I$ implies the existence of a nonzero homogeneous element $x\in L\cap K$, that is, $x\notin I$. Recalling that
\[
\bar{K}=(K+I)/I\cong K/(K\cap I)\cong K\cong F\langle X|G \rangle,
\]
we have that $\bar{K}\bar{x}\subset \bar{L}$. In other words, we have a nonzero homogeneous left ideal of $\bar{K}$ which satisfies a graded polynomial identity. However this is an absurd according to Lemma \ref{l2}.
\end{proof}

As a corollary we have the following partial answer to Question \ref{question}.

\begin{Corollary}
Let $A$ be a $G$-graded $F$-algebra such that $A=B+C$ where $B$ is a nonzero homogeneous ideal of $A$ and $C$ is a homogeneous subalgebra of $A$. Moreover assume that $B$ and $C$ both satisfy graded polynomial identities. Then $A$ is a gr-PI algebra.
\end{Corollary}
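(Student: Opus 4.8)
The plan is to apply Theorem~\ref{t1} to a suitably chosen class $\mathcal{A}$ of $G$-graded algebras. The natural candidate is the class of all $G$-graded algebras $A$ that can be written as $A = B + C$, where $B$ is a homogeneous ideal of $A$ satisfying a fixed graded identity $f$ (taken from $Id_G(B)$), $C$ is a homogeneous subalgebra satisfying a fixed graded identity $g$ (taken from $Id_G(C)$), and moreover $B$ satisfies the \emph{same} graded identity that $A_1$-type constraints would require — but to keep things clean I would fix the identities $f$ and $g$ at the outset and let $\mathcal{A} = \mathcal{A}_{f,g}$ be the class of all $G$-graded $F$-algebras $A$ admitting a decomposition $A = B + C$ with $B$ a homogeneous ideal satisfying $f = 0$ and $C$ a homogeneous subalgebra satisfying $g = 0$. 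One then checks the two closure hypotheses of Theorem~\ref{t1}, and verifies that every $gr$-prime member of $\mathcal{A}_{f,g}$ has a nonzero homogeneous $gr$-PI ideal; our given algebra $A$ lies in $\mathcal{A}_{f,g}$ for $f \in Id_G(B)\setminus\{0\}$, $g \in Id_G(C)\setminus\{0\}$, so the theorem yields that $A$ is $gr$-PI.

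\textbf{First} I would verify closure of $\mathcal{A}_{f,g}$ under graded homomorphic images: if $A = B + C$ is as above and $\pi\colon A \to A'$ is a surjective graded homomorphism, then $A' = \pi(B) + \pi(C)$, the image $\pi(B)$ is a homogeneous ideal of $A'$ still satisfying $f = 0$ (identities pass to homomorphic images), and $\pi(C)$ is a homogeneous subalgebra satisfying $g = 0$. Closure under direct powers is equally routine: if $A = B + C$ then $\prod A = \prod B + \prod C$ componentwise, $\prod B$ is a homogeneous ideal of $\prod A$ (using the grading on the direct power described before Lemma~\ref{l11}), and $\prod B$, $\prod C$ satisfy $f = 0$, $g = 0$ respectively by Lemma~\ref{l11}. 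So far this is all bookkeeping.

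\textbf{Next}, the substantive step: take any $gr$-prime $A' = B' + C'$ in $\mathcal{A}_{f,g}$ and produce a nonzero homogeneous $gr$-PI ideal. If $B' \neq 0$ we are done immediately, since $B'$ is itself a nonzero homogeneous ideal satisfying $f = 0$. The only remaining case is $B' = 0$, i.e.\ $A' = C'$; but then $A'$ itself satisfies $g = 0$, so $A'$ is a nonzero homogeneous ideal of itself satisfying a graded identity. Hence in every case the hypothesis of Theorem~\ref{t1} holds, and the theorem applies. I expect this is where one must be slightly careful about degenerate situations (e.g.\ $B' = 0$), but there is no real obstacle: the class was designed precisely so that a $gr$-PI homogeneous ideal is handed to us for free, independently of $gr$-primeness. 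The role of $gr$-primeness, which seemed essential in Theorem~\ref{t1}, is not even needed here — the conclusion follows from the weaker structural input.

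\textbf{The main subtlety}, such as it is, lies in making sure the class $\mathcal{A}$ is genuinely closed under the two operations while retaining a uniform bound, i.e.\ that we fix the pair of identities $(f,g)$ once and for all rather than allowing them to vary across the class; this is what lets the argument of Theorem~\ref{t1} go through verbatim. Once that is set up, the proof is essentially a two-line application of the theorem, and I would write it as: \emph{let $f \in Id_G(B)\setminus\{0\}$ and $g \in Id_G(C)\setminus\{0\}$, let $\mathcal{A}$ be the class of $G$-graded algebras decomposable as (ideal satisfying $f$) $+$ (subalgebra satisfying $g$); this class is closed under graded homomorphic images and direct powers, and every member has a nonzero homogeneous $gr$-PI ideal, so by Theorem~\ref{t1} every member — in particular $A$ — is $gr$-PI.}
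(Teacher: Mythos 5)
Your proof is correct and follows essentially the same route as the paper: define a class of $G$-graded algebras decomposable as (homogeneous $gr$-PI ideal) $+$ (homogeneous $gr$-PI subalgebra), verify closure under graded homomorphic images and direct powers, observe that every member has a nonzero homogeneous $gr$-PI ideal, and invoke Theorem~\ref{t1}. The only cosmetic difference is that you fix the identities $f$ and $g$ once and for all, while the paper lets them vary over the class; Theorem~\ref{t1} only asks for \emph{some} graded identity on the ideal, so neither choice affects the argument.
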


\begin{proof}
Consider the class $\mathcal{A}$ of all $G$-graded algebras $A=B+C$ 
where $B$ is a nonzero homogeneous $gr$-PI ideal and $C$ is a homogeneous $gr$-PI subalgebra (notice that we may have $C=\{0\}$). Hence one can see that $\mathcal{A}$ contains the class of all $G$-graded algebras satisfying some graded polynomial identity and actually we aim to show that these classes are the same. 

Given $A\in\mathcal{A}$, note that $\prod A=\prod B +\prod C$, $\prod B$ is a nonzero homogeneous ideal of $\prod A$ and $\prod C$ is homogeneous subalgebra of $\prod A$. Moreover, Lemma \ref{l11} gives us that both $\prod B$ and $\prod C$ satisfy graded polynomial identities provided $B$ and $C$ also do. Hence $\prod A \in \mathcal{A}$. 
Now let $\varphi\colon A \rightarrow A_{2}$ be a graded epimorphism where $A=B+C\in\mathcal{A}$. Then $A_{2}=\varphi(B)+\varphi(C)$ where $\varphi(B)$ is a homogeneous $gr$-PI ideal of $A$ and $\varphi(C)$ is a homogeneous $gr$-PI subalgebra of $A$. In case $\varphi(B)=\{0\}$, then $A_{2}$ is a $gr$-PI algebra and thus it belongs to $\mathcal{A}$. Otherwise $\varphi(B)$ is a nonzero ideal and we also have $A_{2}\in \mathcal{A}$. 

Since every algebra in $\mathcal{A}$ contains some nonzero homogeneous $gr$-PI ideal, then we can apply Theorem \ref{t1} to get that $\mathcal{A}$ is a class of $gr$-PI algebras. In particular, $A\in\mathcal{A}$ is a $gr$-PI algebra.
\end{proof}

We note that no additional information is given on the identity satisfied by $A$. However, this is not the case when we assume that  either $B$ or $C$ satisfies some ordinary polynomial identity.

\begin{Corollary}
Let $B$ be a homogeneous ideal satisfying some ordinary polynomial identity $f(x_{1},\dots,x_{m})$ and let $C$ be a homogeneous subalgebra satisfying some multilinear graded identity $g(x_{1}^{(g_{1})},\dots,x_{n}^{(g_{n})})$. Then the algebra $A$ satisfies the following graded identity 
\[
f(g(x_{11}^{(g_{1})},\dots,x_{1n}^{(g_{n})}),\dots,g(x_{m1}^{(g_{1})},\dots,x_{mn}^{(g_{n})})).
\]
\end{Corollary}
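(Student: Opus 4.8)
The plan is to verify directly that the displayed polynomial, which I will call $h$, lies in $Id_{G}(A)$, by showing that under any graded substitution the inner $g$-evaluations all land in the ideal $B$, after which the ordinary identity $f$ of $B$ finishes the job.

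First I would fix a graded homomorphism $\phi\colon F\langle X|G\rangle\to A$ and put $a_{ij}=\phi(x_{ij}^{(g_{j})})\in A_{g_{j}}$ for $1\le i\le m$ and $1\le j\le n$. Since $A=B+C$ with $B$, $C$ homogeneous, Lemma \ref{l} applied to the graded vector space $A$ yields $A_{g_{j}}=B_{g_{j}}+C_{g_{j}}$, so we may write $a_{ij}=b_{ij}+c_{ij}$ with $b_{ij}\in B_{g_{j}}\subseteq B$ and $c_{ij}\in C_{g_{j}}\subseteq C$.

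The key step is to show $g(a_{i1},\dots,a_{in})\in B$ for each $i$. One clean way is to pass to the quotient $A/B$, which is a $G$-graded algebra since $B$ is a homogeneous ideal, with quotient map $\pi$ a graded homomorphism: as $\pi(a_{ij})=\pi(c_{ij})$ and $c_{ij}$ is a homogeneous element of $C$ of degree $g_{j}$, and $g\in Id_{G}(C)$, we get $\pi(g(a_{i1},\dots,a_{in}))=\pi(g(c_{i1},\dots,c_{in}))=0$, i.e.\ $g(a_{i1},\dots,a_{in})\in B$. Equivalently, and this is where the multilinearity of $g$ is convenient, expand $g(b_{i1}+c_{i1},\dots,b_{in}+c_{in})$ by linearity in each slot: the single summand in which every argument is replaced by the corresponding $c_{ij}$ equals $g(c_{i1},\dots,c_{in})=0$, while every remaining summand is a scalar times a product of $n$ elements of $A$ at least one of which lies in $B$, hence lies in $B$ because $B$ is a two-sided ideal.

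Finally, $\phi(h)=f\bigl(g(a_{11},\dots,a_{1n}),\dots,g(a_{m1},\dots,a_{mn})\bigr)$ is a value of the polynomial $f$ on the $m$ elements $g(a_{i1},\dots,a_{in})\in B$; since $f$ is a polynomial identity of $B$, this is $0$. As $\phi$ was arbitrary, $h\in\bigcap\ker\phi$. There is essentially no analytic obstacle here; the only point deserving a word is that $h$ is a \emph{nonzero} element of $F\langle X|G\rangle$, so that it genuinely qualifies as an identity in the sense of the paper. This holds because for distinct $i$ the copies $g(x_{i1}^{(g_{1})},\dots,x_{in}^{(g_{n})})$ involve pairwise disjoint sets of variables, so no cancellation among the monomials of $f$ can occur after substitution — using that $F\langle X|G\rangle$ has no zero divisors and, if one wishes, that $f$ may be taken multilinear.
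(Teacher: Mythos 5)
Your proof is correct and follows essentially the same route as the paper's: decompose each homogeneous value $a_{ij}=b_{ij}+c_{ij}$ via Lemma \ref{l}, use the multilinearity of $g$ (or, equivalently, your quotient map $A/B$) to see that every evaluation $g(a_{i1},\dots,a_{in})$ lands in the ideal $B$, and then apply the ordinary identity $f$ of $B$. Your additional check that the composed polynomial is nonzero is a point the paper leaves implicit, and your disjoint-variables argument for it is sound.
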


\begin{proof}
We evaluate each homogeneous variable of degree $h$ by some element $a_{h}\in A_{h}$. By Lemma \ref{l}, we can write $a_{h}=b_{h}+c_{h}$ where $b_{h}\in B_{h}$ and $c_{h}\in C_{h}$. Since the polynomial $g$ is multilinear, it turns out that each evaluation of $g$ on homogeneous elements of $A$ is a sum of evaluations of $g$ on homogeneous elements of $B$ or $C$. Those which are only on elements of $C$ are actually zero since $g$ is a graded polynomial identity for $C$. The remaining ones belong to $B$ since $B$ is an ideal of $A$, and then we use that $f$ is an ordinary polynomial identity for $B$ to get the desired conclusion.
\end{proof}

\begin{Corollary}
Let $B$ be a homogeneous ideal of $A$ satisfying a multilinear graded polynomial identity $f(x_{1}^{(g_{1})},\dots,x_{m}^{(g_{m})})$ and let $C$ be a homogeneous subalgebra satisfying some ordinary multilinear polynomial identity $g(x_{1},\dots,x_{n})$. Then $A$ satisfies the following graded identity 
\[
f(g(x_{11}^{(g_{1})},x_{12}^{(1)},\dots,x_{1n}^{(1)}),\dots,g(x_{m1}^{(g_{n})},x_{m2}^{(1)},\dots,x_{mn}^{(1)})).
\]
\end{Corollary}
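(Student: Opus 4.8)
The proof follows the pattern of the preceding corollary, with the roles of the graded and the ordinary identity interchanged, plus one observation on homogeneity that dictates the shape of the inner polynomial. First I would substitute homogeneous elements $a_{ij}\in A$ for the variables $x_{ij}$, so that $a_{i1}\in A_{g_i}$ and $a_{ij}\in A_1$ for $j\ge 2$, and, using Lemma~\ref{l} for the sum $A=B+C$, write $a_{ij}=b_{ij}+c_{ij}$ with $b_{ij}$ and $c_{ij}$ in the corresponding homogeneous components of $B$ and $C$.

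The key step is to analyse a single inner block $w_i:=g(a_{i1},a_{i2},\dots,a_{in})$. On the one hand, since $g$ is multilinear each of its monomials evaluates to a product of $n$ homogeneous factors, exactly one of degree $g_i$ (the one in the first slot) and the remaining $n-1$ of degree $1$; in the group $G$ this product equals $g_i$ irrespective of the order of the factors, so $w_i$ is homogeneous of degree $g_i$. On the other hand, expanding $w_i$ via the decompositions $a_{ij}=b_{ij}+c_{ij}$ and the multilinearity of $g$, the summand in which every slot carries a $c_{ij}$ equals $g(c_{i1},\dots,c_{in})=0$ because $g$ is an identity of $C$, while every remaining summand contains at least one factor from $B$ and therefore lies in $B$, $B$ being an ideal of $A$. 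Hence $w_i\in B\cap A_{g_i}=B_{g_i}$.

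It then remains to plug the $w_i$ into $f$: the value $f(w_1,\dots,w_m)$ is an evaluation of the multilinear graded polynomial $f$ on homogeneous elements of $B$ with the $i$-th argument in $B_{g_i}$, matching the homogeneous degree of the variable $x_i^{(g_i)}$ occurring in $f$; since $f\in Id_G(B)$ this value is $0$. As the $a_{ij}$ ranged over arbitrary homogeneous elements of the prescribed degrees, the displayed polynomial lies in $Id_G(A)$; it is moreover nonzero in $F\langle X|G\rangle$, since the pairwise disjoint sets of variables used in the blocks prevent any cancellation when the nonzero multilinear $g$ is substituted into the nonzero multilinear $f$. The only part that is not purely mechanical is the homogeneity observation for $w_i$, and that is precisely the reason for choosing the first argument of each copy of $g$ to be of degree $g_i$ and the rest to be neutral.
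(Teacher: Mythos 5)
Your proof is correct and follows essentially the same route as the paper, which simply adapts the argument of the preceding corollary: expand each inner block $g(a_{i1},\dots,a_{in})$ by multilinearity, kill the all-$C$ summand using the ordinary identity of $C$, place the rest in $B_{g_i}$ via the ideal property and homogeneity of $B$, and finish with the graded identity $f$ of $B$. Your added checks (that each block is homogeneous of degree $g_i$ because the neutral degrees contribute trivially, and that the composed polynomial is nonzero) are welcome details the paper leaves implicit, and you correctly read $x_{m1}^{(g_n)}$ as $x_{m1}^{(g_m)}$.
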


\begin{proof}
The proof follows the argument from the previous corollary, and hence it will be omitted.  
\end{proof}

\section{\bf Graded semi-identities}

In this section $A=B+C$ will denote an associative algebra over a field $F$ graded by a finite group $G$, and $A$ is a sum of its homogeneous subalgebras $B$ and $C$. Following closely the approach given in \cite{FGL}, we will define the so-called (graded) semi-identities for the sum $A=B+C$ and we will show how some particular semi-identities can ensure the existence of graded polynomial identities for $A$. 

As a motivation for the following definition we observe that $A_g=B_g+C_g$ for each $g\in G$, hence every $a_g\in A_g$ is a sum $a_g=b_g+c_g$, $b_g\in B_g$, $c_g\in C_g$. Thus it is convenient to consider free variables corresponding to the homogeneous components $B_g$ and $C_g$ and look for their interplay with the variables corresponding to $A_g$.

We introduce the following sets of variables. For each $g\in G$ we consider $Y^{(g)}=\{y_{1}^{(g)},y_{2}^{(g)},\dots\}$ and $Z^{(g)}=\{z_{1}^{(g)},z_{2}^{(g)},\dots\}$. Take $Y=\bigcup_{g\in G}Y^{(g)}$ and $Z=\bigcup_{g\in G}Z^{(g)}$ and let $F\langle Y\cup Z|G \rangle$ be the $G$-graded free associative algebra freely generated over $F$ by $Y\cup Z$. We set $x_{i}^{(g)}=y_{i}^{(g)}+z_{i}^{(g)}$ for each $g\in G$, and then we may consider $F\langle X|G \rangle\subset F\langle Y\cup Z|G \rangle$. Next we introduce the definition of graded semi-identities for a sum $A=B+C$.

\begin{Definition}
Let
\[
f=f(y_{1}^{(h_{1})},\dots,y_{m}^{(h_{m})},z_{1}^{(\tilde{h}_{1})},\dots,z_{n}^{(\tilde{h}_{n})})\in F\langle Y\cup Z|G \rangle,
\]
such that $h_{1}$, \dots, $h_{m}$, $\tilde{h}_{1}$, \dots, $\tilde{h}_{n}\in G$.
We say that $f=0$ is a {\it graded semi-identity} for $A=B+C$ if  
\[
f(b_{1}^{(h_{1})},\dots,b_{m}^{(h_{m})},c_{1}^{(\tilde{h}_{1})},\dots,c_{n}^{(\tilde{h}_{n})})=0
\]
for all $b_{i}^{(h_{i})}\in B_{h_{i}}$, $c_{j}^{(\tilde{h}_{j})}\in C_{\tilde{h}_{j}}$. We say that a graded semi-identity $f=0$ is trivial if $f\in Id_{G}(A)$.
\end{Definition}

We observe here that the notion of a (graded) semi-identity depends on the decomposition $A=B+C$.

\begin{Example}\label{example nontrivial semi}
Let $D$ be an $F$-algebra without $1$ such that $D=S_{1}+S_{2}$, where $S_{1}$ is a subalgebra of $D$ and $S_{2}$ is an ideal of $D$ (for instance, take $D=F\langle X \rangle$, the free associative algebra without $1$, $S_1$ the subalgebra generated by all monomials which do not contain the variable $x_{1}$, and $S_{2}$ the subalgebra generated by the monomials that contain $x_{1}$).

We now set 
\[
A=\left(\begin{array}{cc}
D&S_2\\
S_2&D
\end{array}\right).
\]
Consider the $\Z_{2}$-grading on $A$ given by $A=A_{0}\oplus A_{1}$ where 
\[
A_{0}=\left(\begin{array}{cc}
D&0\\
0&D
\end{array}\right) \ \mbox{and} \ A_{1}=\left(\begin{array}{cc}
0&S_2\\
S_2&0
\end{array}\right).
\]
Notice then that $A=B+C$, where
\[
B=\left(\begin{array}{cc}
S_1&0\\
0&S_1
\end{array}\right) \ \mbox{and} \ C=\left(\begin{array}{cc}
S_2&S_2\\
S_2&S_2
\end{array}\right),
\]
and  both $B$ and $C$ are homogeneous subalgebras of $A$. Now, it is clear that $y_{1}^{(1)}=0$ is a non-trivial graded semi-identity for $A$.
\end{Example}

In this section we will present some nontrivial  graded semi-identities satisfied by $A$ that imply the existence of graded polynomial identities for $A$. We now define the type of a multilinear polynomial in $F\langle Y\cup Z|G \rangle$ which we will consider as a semi-identity for $A$.

\begin{Definition}
Let $y_{1}$, \dots, $y_{d}\in Y$ and $x_{d+1}$, \dots, $x_{2d-1}\in X$ be  homogeneous variables. We define the following polynomial
\[
Sp_{d}(y_{1},\dots,y_{d};x_{d+1},\dots,x_{2d-1})=\sum_{\sigma\in S_{d}}\alpha_{\sigma}y_{\sigma(1)}x_{d+1}y_{\sigma(2)}x_{d+2}\cdots x_{2d-1}y_{\sigma(d)}
\]
where $\alpha_{\sigma}\in F$, and $S_d$ stands for the symmetric group permuting $\{1,\ldots, d\}$.
\end{Definition}

The polynomial defined above was influenced by a generalization of the {\it Capelli identity}, the so-called {\it sparse identity} (see for instance \cite{BBRY}). We will be particularly interested in the case where the variables $y$'s are of the same homogeneous degree $g$ and the variables $x$'s are all of homogeneous degree $g^{-1}$. We will denote the latter polynomial as $Sp_{d}[Y^{(g)},X^{(g^{-1})}]$.   

\subsection{Codimensions modulo graded semi-identities}

From now on we assume that $G=\{g_{1},\ldots,g_{k}\}$. 

Given $n\in\mathbb{N}$ we write $n=n_{1}+\cdots+n_{k}$, where $n_{1}$, \dots, $n_{k}$ are non-negative integers. Consider the vector space of multilinear graded polynomials in $n_{i}$ homogeneous variables of degrees $g_{i}$, respectively.
\begin{align*}
P_{n_{1},\dots ,n_{k}}&=span\{u_{\sigma(1)}\cdots u_{\sigma(n)}\mid \sigma\in S_{n}, u_{i_{1}}=x_{i_{1}}^{(g_{1})} \ \mbox{for} \ i_{1}\in\{1,\dots ,n_{1}\},\\ 
&u_{n_{1}+i_{2}}=x_{n_{1}+i_{2}}^{(g_{2})} \ \mbox{for} \ i_{2}\in\{1,\dots,n_{2}\},\dots, \\
&u_{n_{1}+\cdots n_{k-1}+i_{k}}=x_{n_{1}+\cdots n_{k-1}+i_{k}}^{(g_{k})} \ \mbox{for} \ i_{k}\in\{1,\dots,n_{k}\}
 \}.
\end{align*}

Note that $\dim(P_{n_{1},\dots,n_{k}})=n!$ and therefore we have the following straightforward lemma.

\begin{Lemma}\label{l6}
If there exists a positive integer $n=n_{1}+\cdots+n_{k}$ such that 
\[
\dim\bigg(\frac{P_{n_{1},\dots,n_{k}}}{P_{n_{1},\dots,n_{k}}\cap Id_{G}(A)}\bigg)<n!,
\]
then $A$ satisfies a multilinear graded polynomial identity of degree $n$ in $n_{i}$ variables of homogeneous degrees $g_{i}$, respectively.
\end{Lemma}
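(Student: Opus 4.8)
The statement to prove is Lemma~\ref{l6}. Here is how I would approach it.

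\medskip

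The plan is to unwind the definitions and observe that this is essentially a tautology once one recalls that $P_{n_1,\dots,n_k}$ has dimension exactly $n!$. First I would fix a positive integer $n = n_1 + \cdots + n_k$ for which the displayed strict inequality holds, and consider the natural projection
\[
\pi \colon P_{n_1,\dots,n_k} \longrightarrow \frac{P_{n_1,\dots,n_k}}{P_{n_1,\dots,n_k}\cap Id_G(A)}.
\]
Since $P_{n_1,\dots,n_k}$ is a vector space of dimension $n!$ and its image under $\pi$ has dimension strictly less than $n!$, the kernel $P_{n_1,\dots,n_k}\cap Id_G(A)$ must be nonzero. Hence there exists a nonzero polynomial $f \in P_{n_1,\dots,n_k}\cap Id_G(A)$.

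\medskip

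It then remains to check that such an $f$ is a multilinear graded polynomial identity of the prescribed shape. By construction of $P_{n_1,\dots,n_k}$, every element of it is a linear combination of monomials of the form $u_{\sigma(1)}\cdots u_{\sigma(n)}$ where the variables $u_i$ are the fixed homogeneous variables: $n_1$ of them of degree $g_1$, next $n_2$ of degree $g_2$, and so on up to $n_k$ of degree $g_k$. In particular $f$ is multilinear of degree $n$ and involves exactly $n_i$ variables of homogeneous degree $g_i$. Being a nonzero element of $Id_G(A)$, it is by definition a graded polynomial identity for $A$. This yields precisely the conclusion of the lemma.

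\medskip

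There is no real obstacle here; the only point requiring a word is the rank–nullity bookkeeping, namely that $\dim\big(P_{n_1,\dots,n_k}\big) = n!$ (already recorded in the excerpt just before the lemma) together with $\dim(\operatorname{im}\pi) < n!$ forces $\dim(\ker\pi) = n! - \dim(\operatorname{im}\pi) > 0$. Everything else is a matter of quoting the definition of $Id_G(A)$ and of $P_{n_1,\dots,n_k}$. I would therefore keep the proof to a couple of sentences.
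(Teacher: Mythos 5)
Your proof is correct and is exactly the intended argument: the paper offers no proof at all, simply remarking that $\dim(P_{n_1,\dots,n_k})=n!$ makes the lemma ``straightforward,'' and your rank--nullity observation that the strict inequality forces $P_{n_1,\dots,n_k}\cap Id_G(A)\neq\{0\}$ is precisely what that remark alludes to.
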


Denoting by $Id_{G}^{s}(A)$ the set of all graded semi-identities of $A$ (including the identically zero polynomial), it follows immediately that 
$Id_{G}(A)$, $Id_{G}(B)$, $Id_{G}(C)\subset Id_{G}^{s}(A)$. Moreover $Id_{G}^{s}(A)$ is an ideal of $F\langle  Y\cup Z|G \rangle$ which is invariant under all graded endomorphisms that preserve both $F\langle Y|G \rangle$ and $F\langle Z|G \rangle$.

The vector space of multilinear polynomials in $F\langle Y\cup Z|G \rangle$ is defined as

\begin{align*}
V_{n_{1},\dots, n_{k}}&=span\{v_{\sigma(1)}\cdots v_{\sigma(n)}\mid \sigma\in S_{n}, v_{i_{1}}\in\{y_{i_{1}}^{(g_{1})},z_{i_{1}}^{(g_{1})}\} \ \mbox{for} \ i_{1}\in\{1,\dots,n_{1}\},
\ldots,\\ 
& v_{n_{1}+\cdots+ n_{k-1}+i_{k}}\in\{y_{n_{1}+\cdots+n_{k-1}+i_{k}}^{(g_{k})},z_{n_{1}+\cdots+n_{k-1}+i_{k}}^{(g_{k})}\} \ \mbox{for} \ i_{k}\in\{1,\dots,n_{k}\} 
\}.
\end{align*}

Note that $P_{n_{1},\dots,n_{k}}\subset V_{n_{1},\dots,n_{k}}$. Moreover, if  
\[
f\in P_{n_{1},\dots,n_{k}}\cap (V_{n_{1},\dots,n_{k}}\cap Id_{G}^{s}(A)),
\] 
then $f$ is a graded semi-identity in the homogeneous variables $y$ and $z$, and $f$ can be written as a polynomial in the variables  $x_{i}^{(g)}=y_{i}^{(g)}+z_{i}^{(g)}$. Now given any $a_{g}\in A_{g}$, Lemma \ref{l} yields the existence of $b_{g}\in B_{g}$ and $c_{g}\in C_{g}$ such that $a_{g}=b_{g}+c_{g}$. Hence an evaluation of the variable $x_{i}^{(g)}$ on some element $a_{g}$ implies an evaluation of the variables $y_{i}^{(g)}$ on some $b_{g}$ and $z_{i}^{(g)}$ on some $c_{g}$, respectively. Since $f$ is a graded semi-identity, then such evaluation must be zero on $A$. This shows that $f$ is a graded identity. Therefore we conclude that  
\[
P_{n_{1},\dots,n_{k}}\cap Id_{G}(A)=P_{n_{1},\dots,n_{k}}\cap(V_{n_{1},\dots,n_{k}}\cap Id_{G}^{s}(A)).
\]
As a consequence of the above observations we have the following lemma.

\begin{Lemma}\label{l10}
If there exists a positive integer $n=n_{1}+\cdots+n_{k}$ such that 
\[
\dim\bigg(\frac{V_{n_{1},\dots,n_{k}}}{V_{n_{1},\dots,n_{k}}\cap Id_{G}^{s}(A)}\bigg)<n!
\]
then $A$ satisfies some multilinear graded polynomial identity of degree $n$ in $n_{i}$ variables of homogeneous degrees $g_{i}$, for $i=1$, \dots, $k$.
\end{Lemma}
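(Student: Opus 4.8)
The plan is to deduce the statement from Lemma~\ref{l6} by comparing the quotient appearing in the hypothesis with the corresponding quotient built from the genuine graded identities of $A$, exploiting the inclusion $P_{n_{1},\dots,n_{k}}\subseteq V_{n_{1},\dots,n_{k}}$.

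First I would consider the linear map
\[
\iota\colon \frac{P_{n_{1},\dots,n_{k}}}{P_{n_{1},\dots,n_{k}}\cap Id_{G}(A)}\longrightarrow \frac{V_{n_{1},\dots,n_{k}}}{V_{n_{1},\dots,n_{k}}\cap Id_{G}^{s}(A)}
\]
sending the class of $f\in P_{n_{1},\dots,n_{k}}$ to the class of the same polynomial $f$, now viewed inside $V_{n_{1},\dots,n_{k}}$. Since $P_{n_{1},\dots,n_{k}}\subseteq V_{n_{1},\dots,n_{k}}$, the equality
\[
P_{n_{1},\dots,n_{k}}\cap Id_{G}(A)=P_{n_{1},\dots,n_{k}}\cap\bigl(V_{n_{1},\dots,n_{k}}\cap Id_{G}^{s}(A)\bigr),
\]
established in the discussion preceding the statement, shows at once that $\iota$ is well defined and, moreover, injective: if $f\in P_{n_{1},\dots,n_{k}}$ maps to $0$, then $f\in V_{n_{1},\dots,n_{k}}\cap Id_{G}^{s}(A)$, hence $f\in P_{n_{1},\dots,n_{k}}\cap Id_{G}(A)$.

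From the injectivity of $\iota$ together with the hypothesis I then obtain
\[
\dim\Bigl(\frac{P_{n_{1},\dots,n_{k}}}{P_{n_{1},\dots,n_{k}}\cap Id_{G}(A)}\Bigr)\le \dim\Bigl(\frac{V_{n_{1},\dots,n_{k}}}{V_{n_{1},\dots,n_{k}}\cap Id_{G}^{s}(A)}\Bigr)<n!,
\]
and Lemma~\ref{l6}, applied with the same decomposition $n=n_{1}+\cdots+n_{k}$, produces the desired multilinear graded polynomial identity for $A$ of degree $n$ in $n_{i}$ variables of homogeneous degree $g_{i}$.

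I do not anticipate a genuine obstacle here, since the substantive content, namely the identification of $P_{n_{1},\dots,n_{k}}\cap Id_{G}(A)$ with $P_{n_{1},\dots,n_{k}}\cap(V_{n_{1},\dots,n_{k}}\cap Id_{G}^{s}(A))$, was carried out just before the statement. The only points demanding a little care are checking that this identification yields \emph{injectivity} of $\iota$ (and not merely that $\iota$ is well defined), and that Lemma~\ref{l6} is being invoked with precisely the same tuple $(n_{1},\dots,n_{k})$ that occurs in the hypothesis.
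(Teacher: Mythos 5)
Your argument is exactly the one the paper intends: the lemma is stated there as an immediate consequence of the inclusion $P_{n_{1},\dots,n_{k}}\subseteq V_{n_{1},\dots,n_{k}}$ and the identity $P_{n_{1},\dots,n_{k}}\cap Id_{G}(A)=P_{n_{1},\dots,n_{k}}\cap(V_{n_{1},\dots,n_{k}}\cap Id_{G}^{s}(A))$ established just before, which is precisely what makes your map $\iota$ injective and lets Lemma~\ref{l6} apply. Your write-up simply makes explicit what the paper leaves implicit; it is correct.
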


We consider one further decomposition on the space $V_{n_{1},\dots,n_{k}}$. For each $j=1$, \dots, $k$ we consider integers $0\leq r_{j}\leq n_{j}$, and $1\leq t_{j_{1}}<\cdots <t_{j_{r_{j}}}\leq n_{j}$. Denoting ${\bf r}=(r_{1},\dots,r_{k})$ and ${\bf t}=(t_{1_{1}},\dots,t_{1_{r_{1}}},\dots,t_{k_{1}},\dots,t_{k_{r_{k}}})$, we define 
\begin{align*} 
V_{n_{1},\dots,n_{k},\bf{r},\bf{t}}&=span\{v_{\sigma(1)}\cdots v_{\sigma(n)}\mid \sigma\in S_{n}, v_{i_{1}}=y_{i_{1}}^{(g_{1})} \ \mbox{for} \ i_{1}\in\{t_{1_{1}},\dots,t_{1_{r_{1}}}\}, \ \mbox{and} \\
 &v_{i_{1}}=z_{i_{1}}^{(g_{1})} \ \mbox{for} \ i_{1}\in\{1,\dots,n_{1}\}\setminus\{t_{1_{1}},\dots,t_{1_{r_{1}}}\},\dots,\\
 &v_{n_{1}+\cdots+n_{k-1}+i_{k}}=y_{n_{1}+\cdots+n_{k-1}+i_{k}}^{(g_{k})} \ \mbox{for} \ i_{k}\in\{t_{k_{1}},\dots,t_{k_{r_{k}}}\} \ \mbox{and}\\
 &v_{n_{1}+\cdots+n_{k-1}+i_{k}}=z_{n_{1}+\cdots+n_{k-1}+i_{k}}^{(g_{k})} \ \mbox{for} \ i_{k}\in\{1,\dots,n_{k}\}\setminus\{t_{k_{1}},\dots,t_{k_{r_{k}}}\}\}.
\end{align*}

Note that
\[
V_{n_{1},\dots,n_{k}}=\bigoplus_{r_{1}=0}^{n_{1}}\bigoplus_{1\leq t_{1_{1}}<\cdots< t_{1_{r_{1}}}\leq n_{1}}\cdots\bigoplus_{r_{k}=0}^{n_{k}}\bigoplus_{1\leq t_{k_{1}}<\cdots< t_{k_{r_{k}}}\leq n_{k}} V_{n_{1},\dots,n_{k},{\bf r}, {\bf t}}.
\]
In particular, a graded semi-identity can be written as a sum of polynomials in $V_{n_{1},\dots,n_{k},{\bf r}, {\bf t}}$. We recall that in $V_{n_{1},\dots,n_{k},{\bf r}, {\bf t}}$ we have $k$ groups of distinct variables, with $n_1$, \dots, $n_k$ variables in each one of them,  respectively. In the $i$-th group of variables we take $r_i$ among them: the ones with indices $\{t_{i_1}, \ldots, t_{i_{r_i}}\}$, $0\le r_i\le n_i$. These variables are the $y^{(g_i)}$, and the remaining variables from this group are $z^{(g_i)}$.   

\begin{Lemma}\label{l7}
\begin{eqnarray}\nonumber
V_{n_{1},\dots,n_{k}}\cap Id_{G}^{s}(A)
=\bigoplus_{\substack{r_{1}=0\\ 1\leq t_{1_{1}}<\cdots< t_{1_{r_{1}}}\leq n_{1}}}^{n_{1}}\cdots\bigoplus_{\substack{r_{k}=0 \\ 1\leq t_{k_{1}}<\cdots< t_{k_{r_{k}}}\leq n_{k}}}^{n_{k}} (V_{n_{1},\dots,n_{k},{\bf r},{\bf t}}\cap Id_{G}^{s}(A)).
\end{eqnarray}
\end{Lemma}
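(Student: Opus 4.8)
The plan is to verify that the right-hand side sum is already direct (it is a subsum of the direct decomposition of $V_{n_{1},\dots,n_{k}}$ displayed just before the lemma, restricted to the intersections with $Id_{G}^{s}(A)$), and then to prove the two inclusions. The inclusion $\supseteq$ is immediate: each summand $V_{n_{1},\dots,n_{k},{\bf r},{\bf t}}\cap Id_{G}^{s}(A)$ is contained both in $V_{n_{1},\dots,n_{k}}$ and in $Id_{G}^{s}(A)$, hence so is their sum. The content of the lemma is the reverse inclusion $\subseteq$, namely that if $f\in V_{n_{1},\dots,n_{k}}$ is a graded semi-identity for $A=B+C$, then each of its homogeneous components $f_{{\bf r},{\bf t}}$ in the decomposition $V_{n_{1},\dots,n_{k}}=\bigoplus V_{n_{1},\dots,n_{k},{\bf r},{\bf t}}$ is again a graded semi-identity.

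First I would fix $f\in V_{n_{1},\dots,n_{k}}\cap Id_{G}^{s}(A)$ and write $f=\sum_{{\bf r},{\bf t}} f_{{\bf r},{\bf t}}$ with $f_{{\bf r},{\bf t}}\in V_{n_{1},\dots,n_{k},{\bf r},{\bf t}}$, this decomposition being unique. The key observation is that two distinct summands $f_{{\bf r},{\bf t}}$ and $f_{{\bf r}',{\bf t}'}$ involve \emph{different sets of variables}: in $V_{n_{1},\dots,n_{k},{\bf r},{\bf t}}$ the variable in slot $i$ of the $j$-th group is $y_i^{(g_j)}$ exactly when $i\in\{t_{j_1},\dots,t_{j_{r_j}}\}$ and is $z_i^{(g_j)}$ otherwise, so the monomials appearing in $f_{{\bf r},{\bf t}}$ are built from a prescribed collection of $y$'s and $z$'s that is determined by $({\bf r},{\bf t})$ and differs from the collection attached to $({\bf r}',{\bf t}')$.

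Next I would carry out the separation-of-variables argument. Take an arbitrary substitution $y_i^{(g_j)}\mapsto b_i^{(g_j)}\in B_{g_j}$, $z_i^{(g_j)}\mapsto c_i^{(g_j)}\in C_{g_j}$; I want to show $f_{{\bf r},{\bf t}}$ vanishes under it. For each slot I replace the pair of generic elements $(b_i^{(g_j)},c_i^{(g_j)})$ by the scaled pair $(\lambda_i b_i^{(g_j)},\mu_i c_i^{(g_j)})$, where $\lambda_i,\mu_i$ are to be chosen. Since $f$ is multilinear in each slot, evaluating $f$ on the scaled elements produces $\sum_{{\bf r},{\bf t}}\big(\prod_{\text{slots with }y}\lambda_i\big)\big(\prod_{\text{slots with }z}\mu_i\big)\,f_{{\bf r},{\bf t}}(\text{unscaled elements})=0$ for all choices of scalars. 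Because the monomials in the scalars $\{\lambda_i,\mu_i\}$ attached to distinct $({\bf r},{\bf t})$ are pairwise distinct (each slot contributes either $\lambda_i$ or $\mu_i$, and the choice pattern encodes $({\bf r},{\bf t})$), a Vandermonde/linear-independence argument over an infinite field forces each $f_{{\bf r},{\bf t}}(\text{unscaled elements})=0$ individually. If $F$ is finite one passes to an infinite extension, or argues directly that distinct multidegrees in the auxiliary scalars cannot cancel; this is the standard homogeneity-separation trick and I would only sketch it. Since the substitution was arbitrary, each $f_{{\bf r},{\bf t}}$ is a graded semi-identity, i.e. $f_{{\bf r},{\bf t}}\in V_{n_{1},\dots,n_{k},{\bf r},{\bf t}}\cap Id_{G}^{s}(A)$, which gives $\subseteq$ and completes the proof.

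The only mild obstacle is the bookkeeping: making precise that the assignment $({\bf r},{\bf t})\mapsto$ (the pattern of which slots carry $y$ versus $z$) is injective, so that the auxiliary scalar multidegrees genuinely separate the summands, and handling the finite-field case cleanly. Conceptually this is nothing more than the familiar fact that a graded-component decomposition of a $T$-ideal-type space is compatible with any multilinear identity-like condition that is itself stable under the relevant diagonal (here: independent $B$- and $C$-)scalings, so I would present it briefly rather than belaboring it.
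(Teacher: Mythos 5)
Your proof is correct, but it runs the component-separation step differently from the paper. You separate the summands $f_{{\bf r},{\bf t}}$ by rescaling the $B$-entries and $C$-entries with independent scalars $\lambda_i,\mu_i$ and invoking multihomogeneity plus a Vandermonde-type linear independence of the resulting scalar monomials; the paper instead inducts on the number of nonzero summands, picks a variable $y_i^{(g_j)}$ occurring in one summand but not another, and substitutes $0$ for it (legitimate since $0\in B_{g_j}$) to split $f$ into two shorter semi-identities. The paper's route is more elementary and works verbatim over any field, whereas your scaling argument needs $F$ infinite; you do flag this, and your extension-of-scalars fallback is sound for multilinear polynomials, but your alternative phrasing that ``distinct multidegrees in the auxiliary scalars cannot cancel'' is not literally true over a finite field (as polynomial functions, e.g.\ $\lambda^{q}=\lambda$ over $\mathbb{F}_q$), so that clause should be dropped or replaced. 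In fact, specializing your scalars to $\{0,1\}$ and applying inclusion--exclusion on the subset of slots carrying a $y$-variable recovers exactly the paper's substitute-by-zero argument and removes the field-size issue, so the two proofs are two dialects of the same idea; yours is the standard multihomogeneous-component formulation, the paper's is the minimal version tailored to the fact that only the $y$-versus-$z$ pattern, not actual degrees, distinguishes the summands.
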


\begin{proof}
Let $f\in V_{n_{1},\dots,n_{k}}\cap Id_{G}^{s}(A)$ and write
\begin{eqnarray}\label{e1}
f=\sum_{r_{1}=0}^{n_{1}}\sum_{1\leq t_{1_{1}}<\cdots< t_{1_{r_{1}}}\leq n_{1}}\cdots \sum_{r_{k}=0}^{n_{k}}\sum_{1\leq t_{k_{1}}<\cdots<t_{k_{r_{k}}}\leq n_{k}}
f_{{\bf r},{\bf t}}.
\end{eqnarray}
We have to show that each $f_{{\bf r},{\bf t}}$ is also a graded semi-identity of $A$. We proceed by induction on the number of terms of the sum (\ref{e1}). The base case is when  $f=f_{{\bf r},{\bf t}}$, and here we already have $f_{{\bf r},{\bf t}}\in Id_{G}^{s}(A)$. From now on we suppose that there exist at least two nonzero terms in (\ref{e1}), and we take two distinct of them, say $f_{{\bf r},{\bf t}}$ and $\tilde{f}_{{\bf \tilde{r}},{\bf \tilde{t}}}$. Hence there exists some variable, which without loss of generality we will suppose $y_{i}^{(g_{1})}$, such that it occurs in $f_{{\bf r},{\bf t}}$ but not in $\tilde{f}_{{\bf \tilde{r}},{\bf \tilde{t}}}$. Now we write $f=f_{1}+f_{2}$, where $f_{1}$ is the sum of the terms from (\ref{e1}) that  contain the variable $y_{i}^{(g_{1})}$ and $f_{2}$ is the sum of terms from (\ref{e1}) which do not. Evaluating $y_{i}^{(g_{1})}$ by $0$ we obtain that $f_{2}$ is a consequence of $f$ and therefore $f_{2}$ is a graded semi-identity. Hence the same happens to $f_{1}=f-f_{2}$. Now it is enough to apply the induction hypothesis to both $f_{1}$ and $f_{2}$.
\end{proof}

We finish this section by showing how the decomposition above can be used to prove the existence of a graded polynomial identity for $A$.

In order to simplify our notation we will write just $V_{n_{1},\dots,n_{k},{\bf r}}$ instead of  $V_{n_{1},\dots,n_{k},{\bf r},{\bf t}}$, where ${\bf t}=(1,\dots,r_{1},\dots,1,\dots,r_{k})$. Note that  given any ${\bf t}$ there exists a graded isomorphism of vector spaces  $V_{n_{1},\dots,n_{k},{\bf r}}\cong V_{n_{1},\dots,n_{k},{\bf r},{\bf t}}$ such that 
\[
V_{n_{1},\dots,n_{k},{\bf r}}\cap Id_{G}^{s}(A)\cong V_{n_{1},\dots,n_{k},{\bf r},{\bf t}}\cap Id_{G}^{s}(A).
\]

We also note that there exist exactly $\displaystyle \binom{n_{1}}{r_{1}}\cdots\binom{n_{k}}{r_{k}}$ vector spaces $V_{n_{1}, \dots, n_{k},{\bf r},{\bf t}}$ isomorphic to $V_{n_{1},\dots,n_{k},{\bf r}}$.  

\begin{Lemma}\label{l8}
If there exists a positive integer $n=n_{1}+\cdots+n_{k}$ such that
\begin{eqnarray}\nonumber
\dim\bigg(\frac{V_{n_{1},\dots,n_{k},{\bf r}}}{V_{n_{1},\dots,n_{k},{\bf r}}\cap  Id_{G}^{s}(A)}\bigg)<\frac{n!}{2^{n}},
\end{eqnarray}
for all ${\bf r}$, then $A$ satisfies some multilinear graded polynomial identity of degree $n$ in $n_{i}$ variables of homogeneous degrees $g_{i}$, for $i=1$, \dots, $k$.
\end{Lemma}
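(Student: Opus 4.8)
The plan is to reduce Lemma \ref{l8} to Lemma \ref{l10} by summing up the dimension estimates over all the blocks $V_{n_{1},\dots,n_{k},{\bf r},{\bf t}}$ that appear in the direct sum decomposition of $V_{n_{1},\dots,n_{k}}$. First I would recall that, by Lemma \ref{l7}, the subspace $V_{n_{1},\dots,n_{k}}\cap Id_{G}^{s}(A)$ splits as the direct sum of the pieces $V_{n_{1},\dots,n_{k},{\bf r},{\bf t}}\cap Id_{G}^{s}(A)$, and since the ambient space $V_{n_{1},\dots,n_{k}}$ splits correspondingly, the quotient $V_{n_{1},\dots,n_{k}}/(V_{n_{1},\dots,n_{k}}\cap Id_{G}^{s}(A))$ decomposes as the direct sum of the quotients $V_{n_{1},\dots,n_{k},{\bf r},{\bf t}}/(V_{n_{1},\dots,n_{k},{\bf r},{\bf t}}\cap Id_{G}^{s}(A))$. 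Hence
\[
\dim\bigg(\frac{V_{n_{1},\dots,n_{k}}}{V_{n_{1},\dots,n_{k}}\cap Id_{G}^{s}(A)}\bigg)
=\sum_{{\bf r},{\bf t}}\dim\bigg(\frac{V_{n_{1},\dots,n_{k},{\bf r},{\bf t}}}{V_{n_{1},\dots,n_{k},{\bf r},{\bf t}}\cap Id_{G}^{s}(A)}\bigg).
\]

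Next I would use the graded vector space isomorphism $V_{n_{1},\dots,n_{k},{\bf r}}\cong V_{n_{1},\dots,n_{k},{\bf r},{\bf t}}$ recorded just before the statement, which also identifies the corresponding intersections with $Id_{G}^{s}(A)$; therefore each summand on the right equals $\dim(V_{n_{1},\dots,n_{k},{\bf r}}/(V_{n_{1},\dots,n_{k},{\bf r}}\cap Id_{G}^{s}(A)))$, and this value depends only on ${\bf r}$, not on ${\bf t}$. Counting the number of choices of ${\bf t}$ for a fixed ${\bf r}$ — which is $\binom{n_{1}}{r_{1}}\cdots\binom{n_{k}}{r_{k}}$, as noted — the total becomes
\[
\dim\bigg(\frac{V_{n_{1},\dots,n_{k}}}{V_{n_{1},\dots,n_{k}}\cap Id_{G}^{s}(A)}\bigg)
=\sum_{{\bf r}}\binom{n_{1}}{r_{1}}\cdots\binom{n_{k}}{r_{k}}\dim\bigg(\frac{V_{n_{1},\dots,n_{k},{\bf r}}}{V_{n_{1},\dots,n_{k},{\bf r}}\cap Id_{G}^{s}(A)}\bigg).
\]

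Now I would apply the hypothesis: each factor $\dim(V_{n_{1},\dots,n_{k},{\bf r}}/(V_{n_{1},\dots,n_{k},{\bf r}}\cap Id_{G}^{s}(A)))$ is strictly less than $n!/2^{n}$, so the right-hand side is strictly less than $\frac{n!}{2^{n}}\sum_{{\bf r}}\binom{n_{1}}{r_{1}}\cdots\binom{n_{k}}{r_{k}}$. Since $\sum_{r_{j}=0}^{n_{j}}\binom{n_{j}}{r_{j}}=2^{n_{j}}$ for each $j$ and $n_{1}+\cdots+n_{k}=n$, the multiple sum equals $2^{n_{1}}\cdots 2^{n_{k}}=2^{n}$, whence the displayed dimension is strictly less than $n!$. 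By Lemma \ref{l10}, $A$ satisfies a multilinear graded polynomial identity of degree $n$ in $n_{i}$ variables of homogeneous degree $g_{i}$, which is the assertion. The argument is essentially bookkeeping; the only point requiring care is making sure the direct sum decompositions of $V_{n_{1},\dots,n_{k}}$ and of its intersection with $Id_{G}^{s}(A)$ are compatible so that dimensions of quotients add — but this is exactly what Lemma \ref{l7} furnishes, so there is no real obstacle.
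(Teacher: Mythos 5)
Your proposal is correct and follows exactly the paper's own argument: decompose via Lemma \ref{l7}, use the isomorphisms $V_{n_{1},\dots,n_{k},{\bf r}}\cong V_{n_{1},\dots,n_{k},{\bf r},{\bf t}}$ together with the count $\binom{n_{1}}{r_{1}}\cdots\binom{n_{k}}{r_{k}}$ of the spaces $V_{n_{1},\dots,n_{k},{\bf r},{\bf t}}$, bound the total dimension by $2^{n}\cdot n!/2^{n}=n!$, and conclude by Lemma \ref{l10}. Nothing is missing.
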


\begin{proof}
By Lemma \ref{l7} we have 
\begin{align*}
\dim\frac{V_{n_{1}, \dots,n_{k}}}{V_{n_{1},\dots,n_{k}}\cap Id_{G}^{s} (A)}
&=\sum_{r_{1}=0}^{n_{1}}\cdots \sum_{r_{k}=0}^{n_{k}}\binom{n_{1}}{r_{1}}\cdots \binom{n_{k}}{r_{k}} \dim \frac{V_{n_{1},\dots,n_{k},{\bf r}}}{V_{n_{1},\dots,n_{k},{\bf r}}\cap Id_{G}^{s}(A)} \\
&< \sum_{r_{1}=0}^{n_{1}}\cdots \sum_{r_{k}=0}^{n_{k}}\binom{n_{1}}{r_{1}}\cdots \binom{n_{k}}{r_{k}}\frac{n!}{2^n}\\
&=2^{n_{1}}\cdots 2^{n_{k}}\frac{n!}{2^n}=n!
\end{align*}
Now it suffices to apply Lemma \ref{l10}.
\end{proof}

\subsection{A generating set for $V_{n,0,\dots,0,\bf{r}}$ modulo $Id_{G}^{s}(A)$}

In this section we give a generating set for $V_{n,0,\dots,0,\bf{r}}$ by using the language of good permutations of the symmetric group. This notion appeared in the proof of the well known theorem of Regev about the exponential upper bound of the codimension sequence of a PI algebra, see \cite{regev}. Good sequences and similar combinatorial notions have been extensively used in studying numerical invariants of PI algebras such as codimensions, cocharacters, and so on.

\begin{Definition}
Let $n\in\mathbb{N}$ and $1\leq d\leq n$. We say that $\sigma\in S_{n}$ is a $d$-bad permutation if there exist $1\leq i_{1}<\cdots< i_{d}\leq n$ such that   $\sigma(i_{1})>\cdots>\sigma(i_{d})$. Otherwise we call $\sigma \in S_{n}$ a $d$-good permutation.
\end{Definition}

We recall a well known result about the number of $d$-good permutations (see \cite{Reg}, for instance). It can be obtained by using the well known theorem of Dilworth in Combinatorics.

\begin{Lemma}
The number of $d$-good permutations in $S_{n}$ is at most $(d-1)^{2n}$.
\end{Lemma}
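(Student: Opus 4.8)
The plan is to recast the notion of $d$-goodness and then reduce the count to a simple product bound via Dilworth's theorem. First I would note that $\sigma\in S_n$ is $d$-good exactly when the sequence $(\sigma(1),\dots,\sigma(n))$ has no decreasing subsequence of length $d$, i.e.\ its longest decreasing subsequence has length at most $d-1$. Given such a $\sigma$, consider the partial order $\preceq_\sigma$ on $\{1,\dots,n\}$ defined by $i\preceq_\sigma j$ if and only if $i\le j$ and $\sigma(i)\le\sigma(j)$. A chain for $\preceq_\sigma$ is precisely an increasing subsequence of $\sigma$, while an antichain is precisely a decreasing subsequence; hence every antichain has size at most $d-1$. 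By Dilworth's theorem, $\{1,\dots,n\}$ can be written as a disjoint union $C_1\sqcup\cdots\sqcup C_{d-1}$ of (possibly empty) chains, and for each $\sigma$ I would fix one such decomposition once and for all.

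Next I would encode each $d$-good $\sigma$ by a pair of functions into $\{1,\dots,d-1\}$. Define $g_\sigma\colon\{1,\dots,n\}\to\{1,\dots,d-1\}$ by $g_\sigma(i)=v$ when $i\in C_v$, set $D_v=\sigma(C_v)$, and define $h_\sigma\colon\{1,\dots,n\}\to\{1,\dots,d-1\}$ by $h_\sigma(j)=v$ when $j\in D_v$ (this makes sense since the $D_v$ partition $\{1,\dots,n\}$). The crucial claim is that $\sigma\mapsto(g_\sigma,h_\sigma)$ is injective: from $g_\sigma$ one recovers the sets $C_v$, from $h_\sigma$ one recovers the sets $D_v$, and one has $\sigma(C_v)=D_v$ with $|C_v|=|D_v|$. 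Since $C_v$ is a chain, $\sigma$ restricted to $C_v$ is strictly increasing, hence equals the unique order-preserving bijection $C_v\to D_v$; running over $v$ this determines $\sigma$ completely.

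Finally, since there are at most $(d-1)^n$ choices for $g_\sigma$ and at most $(d-1)^n$ choices for $h_\sigma$, the injection just described shows that the number of $d$-good permutations in $S_n$ is at most $(d-1)^{2n}$. The only step that deserves genuine attention is the reconstruction: one must verify carefully that a valid chain decomposition forces $\sigma|_{C_v}$ to be order-preserving, and that the collection of pairs $(C_v,D_v)$ pins $\sigma$ down uniquely. The remaining ingredients — the identification of antichains with decreasing subsequences, the application of Dilworth, and the counting of pairs — are entirely routine, so I do not expect a real obstacle beyond this bookkeeping.
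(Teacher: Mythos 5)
Your argument is correct and is precisely the Dilworth-based encoding that the paper alludes to (it cites Regev and remarks that the bound follows from Dilworth's theorem, without writing out a proof): partition the domain into at most $d-1$ increasing chains, record the chain labels of positions and of values, and observe that the pair of labelling functions determines $\sigma$ since each chain maps onto its image by the unique order-preserving bijection. The reconstruction step you flag is sound, so nothing is missing.
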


We will adopt the following convention: given $d>n$, then every permutation in $S_{n}$ is $d$-good. Such convention does not change the maximum number of $d$-good permutations. Indeed, in this case the number is exactly $n!$ which is less than $(d-1)^{2n}$.

\begin{Definition}
Let $m$ be the monomial 
\[
v_{1}y_{\sigma(1)}^{(h_{1})}\cdots y_{\sigma(i_{1})}^{(h_{i_{1}})}v_{2}y_{\sigma(i_{1}+1)}^{(h_{i_{1}+1})}\cdots y_{\sigma(i_{2})}^{(h_{i_{2}})}v_{3}\cdots v_{l}y_{\sigma(i_{l-1}+1)}^{(h_{i_{l-1}+1})}\cdots y_{\sigma(i_{l})}^{(h_{i_{l}})}v_{l+1} \in V_{n_{1},\dots,n_{k},{\bf r},{\bf t}}
\]
where  $v_{1}$, \dots, $v_{l+1}$ are (eventually empty) words in the homogeneous variables of type $z$. We say that $m$ is a $d$-$y$-good monomial if the permutation $\sigma\in S_{l}$ is a $d$-good one. 
\end{Definition}

We recall the following combinatorial fact concerning finite groups,  \cite{BGR}, Lemma 4.1.

\begin{Lemma}\label{l1}
Every product of $|H|d$ words in a finite group $H$ contains a product of $d$ consecutive trivial subwords.
\end{Lemma}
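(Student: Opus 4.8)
The plan is to track the partial products of the given word and then invoke the pigeonhole principle. Write the product of $|H|d$ words as $w_{1}w_{2}\cdots w_{|H|d}$, where $w_{i}$ denotes the value in $H$ of the $i$-th word, and let $e$ be the identity element of $H$. First I would introduce the partial products $p_{0}=e$ and $p_{i}=w_{1}w_{2}\cdots w_{i}$ for $1\leq i\leq |H|d$. These are $|H|d+1$ elements lying in the set $H$, which has only $|H|$ elements.

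Next I would apply pigeonhole: since $|H|d+1>|H|\cdot d$, some value $g\in H$ is attained by at least $d+1$ of the $p_{i}$, say $p_{j_{0}}=p_{j_{1}}=\cdots=p_{j_{d}}=g$ with $0\leq j_{0}<j_{1}<\cdots<j_{d}\leq |H|d$. For each $\ell=1,\dots,d$ set $u_{\ell}=w_{j_{\ell-1}+1}\cdots w_{j_{\ell}}$; because the indices $j_{\ell}$ are strictly increasing, each $u_{\ell}$ is a nonempty subword, and $u_{\ell}=p_{j_{\ell-1}}^{-1}p_{j_{\ell}}=g^{-1}g=e$, so $u_{\ell}$ is trivial in $H$. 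Finally, the subwords $u_{1},\dots,u_{d}$ together occupy the consecutive stretch $w_{j_{0}+1}\cdots w_{j_{d}}$ of the original word, hence they are $d$ consecutive trivial subwords, which is exactly the assertion.

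There is no genuine obstacle here; the argument is a one-step pigeonhole application. The only point needing a little care is that one must locate $d+1$ equal partial products in order to carve out $d$ blocks between them, and that the strict monotonicity of the $j_{\ell}$ is precisely what guarantees each block is nonempty.
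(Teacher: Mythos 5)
Your proof is correct: the pigeonhole count is right ($|H|d+1$ partial products in $|H|$ classes forces some value to occur at least $d+1$ times), and the $d$ blocks between consecutive equal partial products are nonempty, consecutive, and trivial. The paper itself gives no proof, citing the lemma from Bahturin--Giambruno--Riley, and your argument is exactly the standard one used there.
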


In the next lemma we will assume that $A$ satisfies a graded semi-identity of the form  $Sp_{d_{1}}[Y^{(g)};X^{(g^{-1})}]$, for some $g\in G$. Without loss of generality we write $g=g_{1}$ and we denote $n_{1}=n$ and $r_{1}=r$.  We also use $o(g)$ to denote the order of the element $g\in G$ (and of the cyclic subgroup generated by $g$).

\begin{Lemma}\label{l9}
The space $V_{n,0,\dots,0,{\bf r}}$ is generated, modulo $Id_{G}^{s}(A)$, by all  $(2d_{1}-1)o(g)$-$y$-good monomials for all ${\bf r}$.
\end{Lemma}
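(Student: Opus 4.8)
The goal is to show that every monomial in $V_{n,0,\dots,0,{\bf r}}$ can be rewritten, modulo $Id_{G}^{s}(A)$, as a linear combination of $(2d_{1}-1)o(g)$-$y$-good monomials. First I would take an arbitrary basis monomial $m\in V_{n,0,\dots,0,{\bf r}}$ of the form
\[
m=v_{1}y_{\sigma(1)}^{(g)}v_{2}y_{\sigma(2)}^{(g)}v_{3}\cdots v_{r}y_{\sigma(r)}^{(g)}v_{r+1},
\]
where the $v_{i}$ are (possibly empty) words in the $z^{(g)}$-variables, and argue by a descending induction on the number of $y$'s that are already part of a ``good'' initial segment, or equivalently, by induction on some well-chosen ordering of the permutations $\sigma$. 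If $m$ is already $(2d_{1}-1)o(g)$-$y$-good we are done. Otherwise, by definition there are indices $p_{1}<\cdots<p_{(2d_{1}-1)o(g)}$ with $\sigma(p_{1})>\cdots>\sigma(p_{(2d_{1}-1)o(g)})$; that is, $m$ contains a long decreasing chain of $y$'s separated by blocks of $z$'s.

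The key step is to contract this long chain using Lemma~\ref{l1}. The blocks of $z$-variables sitting between consecutive $y$'s in the decreasing chain are words whose homogeneous degrees lie in $G$, so I would look at the sequence of degrees of the $z$-blocks $w_{1},\dots,w_{(2d_{1}-1)o(g)-1}$ interleaving the $y$'s of the chain. Since the chain has length $(2d_{1}-1)o(g)$, there are $(2d_{1}-1)o(g)-1 \geq (2d_{1}-1)\,o(g) - 1$ intervening blocks, and applying Lemma~\ref{l1} to the cyclic subgroup $\langle g\rangle$ of order $o(g)$ (of which $g$ is a generator, and each $z$-block together with the flanking $y^{(g)}$'s carries a degree in $\langle g\rangle$ once we group appropriately) yields $2d_{1}-1$ consecutive sub-stretches each of which, as a whole, has degree that is a power of $g$ compatible with applying the sparse polynomial. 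More precisely, after suitably grouping the decreasing-chain $y$'s into $d_{1}$ ``outer'' slots separated by $2d_{1}-1$ connector words $u_{d_{1}+1},\dots,u_{2d_{1}-1}$ each of homogeneous degree $g^{-1}$ (the trivial-subword conclusion of Lemma~\ref{l1} is exactly what lets us arrange the connectors to have degree $g^{-1}$), the element $m$ becomes a substitution instance of $Sp_{d_{1}}[Y^{(g)};X^{(g^{-1})}]$ \emph{with one particular permutation}, namely the one reversing the chain. Since $Sp_{d_{1}}[Y^{(g)};X^{(g^{-1})}]$ is a graded semi-identity for $A$ (the $y$'s being evaluated in $B_{g}$ here, as all are $Y$-variables), we can solve for the reversing term and express it modulo $Id_{G}^{s}(A)$ as a linear combination of the other $d_{1}!-1$ terms, each of which orders those $y$'s by a permutation strictly smaller in the relevant ordering (fewer inversions along that chain). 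Re-expanding the $z$-blocks inside the connectors, each resulting monomial lies again in $V_{n,0,\dots,0,{\bf r}}$ and is closer to being $y$-good, so the induction closes.

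\textbf{Main obstacle.} The delicate point is the bookkeeping that turns Lemma~\ref{l1} into a clean factorization of $m$ as a substitution instance of $Sp_{d_{1}}[Y^{(g)};X^{(g^{-1})}]$: one must track the homogeneous degrees of the interleaving $z$-blocks, verify that grouping them via the ``$d$ consecutive trivial subwords'' conclusion produces connector words of degree exactly $g^{-1}$ and outer $y$-slots of degree $g$ (so that degrees multiply out correctly and the substitution is graded-admissible), and check that the terms produced by eliminating the reversing permutation really do decrease a monovariant — so that the induction terminates. I would set the monovariant to be, say, the lexicographically largest decreasing subsequence of $y$-indices of length $(2d_{1}-1)o(g)$ present in $m$, and confirm that each of the $d_{1}!-1$ replacement monomials strictly decreases it; this requires that the sparse-identity rewriting only permutes the $d_{1}$ chosen $y$'s among themselves and leaves all other $y$'s in place, which is exactly the shape of $Sp_{d_{1}}$. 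The rest of the argument — that empty $v_{i}$'s, boundary cases, and the passage between ${\bf t}=(1,\dots,r)$ and general ${\bf t}$ (already handled by the isomorphism noted before Lemma~\ref{l8}) cause no trouble — is routine.
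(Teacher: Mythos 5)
Your proposal is correct and follows essentially the same route as the paper: locate the decreasing $y$-chain of length $(2d_{1}-1)o(g)$, apply Lemma~\ref{l1} to the cyclic subgroup $\langle g\rangle$ to extract $2d_{1}-1$ consecutive degree-trivial stretches, regroup them into a graded-admissible substitution instance of $Sp_{d_{1}}[Y^{(g)};X^{(g^{-1})}]$ with connectors of degree $g^{-1}$, and eliminate the offending term. The only difference is cosmetic: the paper runs a minimal-counterexample argument using the left-to-right lexicographic order on the $Y^{(g)}$-variables (which manifestly decreases under the rewriting, unlike a global inversion count), rather than your slightly vaguer choice of monovariant.
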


\begin{proof}
Following our convention we may assume $(2d_{1}-1)o(g)\leq r$.

The proof will be done by contradiction. Thus we assume that $V_{n,0,\dots,0,{\bf r}}$ is not generated by the $(2d_{1}-1)o(g)$-$y$-good monomials modulo $Id_{G}^{s}(A)$. Hence the set $\mathfrak{U}$ of all $(2d_{1}-1)o(g)$-$y$-bad monomials which cannot be written as a linear combination of $(2d_{1}-1) o(g)$-$y$-good monomials modulo $Id_{G}^{s}(A)$ is nonempty. We order the variables in $Y^{(g)}$ as $y_{1}^{(g)}<y_{2}^{(g)}<\cdots$ and we take in  $\mathfrak{U}$ the partial order given lexicographically from the left to the right in the variables in $Y^{(g)}$ only. Let $m_{\tau}\in\mathfrak{U}$ be a minimal element, where $\tau\in S_{r}$ is the permutation which defines the positions of the variables from $Y^{(g)}$ in this minimal element. In particular, $m_{\tau}$ is a $(2d_{1}-1)o(g)$-$y$-bad monomial and hence there exist $1\leq i_{1}<\cdots< i_{(2d_{1}-1)o(g)}\leq r$ such that  $\tau(i_{1})>\cdots>\tau(i_{(2d_{1}-1)o(g)})$. 

We write $m_{\tau}=w_{0}w_{1}w_{2}\cdots w_{(2d_{1}-1)o(g)}w_{(2d_{1}-1)o(g)+1}$, where $w_{j}$ is the word that starts with $y_{\tau(i_{j})}^{(g)}$ and ends just before $y_{\tau(i_{j+1})}^{(g)}$, $j=1$, \dots, $(2d_{1}-1)o(g)-1$, $w_{(2d_{1}-1)o(g)}=y_{\tau(i_{(2d_{1}-1)o(g)})}^{(g)}$, $w_{0} $ and $w_{(2d_{1}-1)o(g)+1}$ are suitable words.

Applying Lemma \ref{l1} to the subgroup generated by $g$, we obtain the existence of $2d_{1}-1$ consecutive words  $\bar{w}_{1}$, \dots, $\bar{w}_{2d_{1}-1}$ among  $w_{1}$, \dots, $w_{(2d_{1}-1)o(g)}$ where $\deg(\bar{w}_{j})=1$, $j=1$, \dots, $2d_{1}-1$.

Hence we rewrite $m_{\tau}=\bar{w}_{0}\bar{w}_{1}\cdots \bar{w}_{2d_{1}-1}\bar{w}_{2d_{1}}$, where $\bar{w}_{0}$ and $\bar{w}_{2d_{1}}$ are suitable words. Note that for each $j=1$, \dots, $d_{1}-1$, we can write $\bar{w}_{2j-1}\bar{w}_{2j}=y_{\tau(i_{l})}^{(g)}m_{d_{1}+j}$ for some $l$ (which depends on $j$) and $m_{d_{1}+j}$ is the subword of $\bar{w}_{2j-1}\bar{w}_{2j}$ obtained by deleting its first variable.  Denote $m_{j}=y_{\tau(i_{l})}^{(g)}$ for $j=1$, \dots, $d_{1}-1$, and take $m_{d_{1}}$ as the first variable of $\bar{w}_{2d_{1}-1}$.

In this way we have 
\[
m_{\tau}=m_{0}m_{1}m_{d_{1}+1}m_{2}\cdots m_{d_{1}-1}m_{2d_{1}-1}m_{d_{1}}m_{2d_{1}}
\]
where $m_{0}=\bar{w}_{0}$, $m_{2d_{1}}$ is a suitable word, $\deg(m_{j})=g$ for $j=1$, \dots, $d_{1}$ and $\deg(m_{d_{1}+j})=g^{-1}$ for $j=1$, \dots, $d_{1}-1$. Indeed, it follows from $\deg(\bar{w}_{2j-1}\bar{w}_{2j})=1$ and $\deg(y_{\tau(i_{l})}^{(g)})=g$ that $\deg(m_{d_{1}+j})=g^{-1}$. We also note that each $m_{j}$ is evaluated on $Y^{(g)}$ and each $m_{d_{1}+j}$ is evaluated on $X^{(g^{-1})}$.  Moreover, we have $m_{1}>\cdots>m_{d_{1}}$.

Recalling that $Sp_{d_{1}}[Y^{(g)};X^{(g^{-1})}]\in Id_{G}^{s}(A)$ we can write  
\[
m_{\tau}=\sum_{\sigma\in S_{d_{1}}\setminus\{id\}}-\alpha_{\sigma}m_{0}m_{\sigma(1)}m_{d_{1}+1}m_{\sigma(2)}\cdots m_{2d_{1}-1}m_{\sigma(d_{1})}m_{2d_{1}} \pmod{Id_{G}^{s}(A)}.
\]
Since for each $\sigma\in S_{d_{1}}\setminus\{id\}$ we have 
\[
m_{\sigma}=m_{0}m_{\sigma(1)}m_{d_{1}+1}m_{\sigma(2)}\cdots m_{2d_{1}-1}m_{\sigma(d_{1})}m_{2d_{1}}<m_{\tau},
\]
the minimality of $m_{\tau}$ leads us to  $m_{\sigma}\notin\mathfrak{U}$. Therefore each $m_{\sigma}$ can be written as a linear combination of $(2d_{1}-1)o(g)$-$y$-good monomials modulo the graded semi-identities of $A$, and hence the same happens for $m_{\tau}$. This is a contradiction to $m_{\tau}\in\mathfrak{U}$. 
\end{proof}

\subsection{Existence of a graded identity for $\mathcal{A}$}
First of all let us estimate the dimension of $V_{n,0,\dots,0,\bf{r}}$ modulo $Id_{G}^{s}(A)$. We start by recalling the following result from \cite{BGR}.

\begin{Theorem}\label{Riley}
Let $A$ be an algebra graded by a finite group $G$. If the neutral component of $A$ satisfies a polynomial identity of degree $d$ then
\[
\dim\frac{P^{(h_{1},\dots,h_{n})}}{P^{(h_{1},\dots,h_{n})}\cap Id_{G}(A)} \leq (|G|d-1)^{2n}
\]
for every $n\in\mathbb{N}$ and $(h_{1},\dots,h_{n})\in G^{n}$.
\end{Theorem}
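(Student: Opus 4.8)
The statement to prove is the bound
\[
\dim\frac{P^{(h_{1},\dots,h_{n})}}{P^{(h_{1},\dots,h_{n})}\cap Id_{G}(A)} \leq (|G|d-1)^{2n}
\]
for a $G$-graded algebra whose neutral component satisfies an identity of degree $d$. The plan is to exhibit an explicit spanning set of size at most $(|G|d-1)^{2n}$ for the quotient, by showing that every multilinear monomial in prescribed homogeneous variables $x_1^{(h_1)},\dots,x_n^{(h_n)}$ is congruent, modulo $Id_G(A)$, to a linear combination of monomials indexed by $d|G|$-good permutations in $S_n$. Since the number of $d|G|$-good permutations in $S_n$ is at most $(d|G|-1)^{2n}$ by the Dilworth-type lemma recalled above (with the convention for $d|G|>n$), this immediately yields the desired estimate.

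\textbf{Key steps.} First I would fix an arbitrary multilinear monomial $m=x_{\sigma(1)}^{(h_{\sigma(1)})}\cdots x_{\sigma(n)}^{(h_{\sigma(n)})}$ and argue by induction on $\sigma$ with respect to a fixed total order on $S_n$ (say lexicographic), exactly in the spirit of the proof of Lemma \ref{l9}: assume $m$ is $d|G|$-bad, so there are positions $i_1<\cdots<i_{d|G|}$ with $\sigma(i_1)>\cdots>\sigma(i_{d|G|})$, and I must show $m$ is a combination of lexicographically smaller monomials modulo $Id_G(A)$. Second, I would cut $m$ at these $d|G|$ descending positions into blocks $w_0 w_1\cdots w_{d|G|}w_{d|G|+1}$ where $w_j$ begins with $x_{\sigma(i_j)}^{(h_{\sigma(i_j)})}$; applying Lemma \ref{l1} to the group $G$ itself, among $w_1,\dots,w_{d|G|}$ one finds $d$ consecutive blocks $\bar w_1,\dots,\bar w_d$ each of homogeneous degree $1$. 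Third, the product $\bar w_1\cdots \bar w_d$ is then an evaluation, on homogeneous elements of the algebra, of a multilinear polynomial in $d$ variables each lying in the neutral component — but the neutral component satisfies a multilinear identity of degree $d$, say $g(y_1,\dots,y_d)=\sum_{\pi\in S_d}\alpha_\pi y_{\pi(1)}\cdots y_{\pi(d)}$ with $\alpha_{\mathrm{id}}\ne 0$ (after multilinearizing and renaming, we may assume the identity permutation occurs with nonzero coefficient and solve for it). Here the subtlety is that each $\bar w_j$ is a \emph{word}, not a single variable, so I would group the variables: let $u_j$ be the homogeneous product corresponding to $\bar w_j$, so that $\bar w_1\cdots\bar w_d = u_1\cdots u_d$, and substitute $y_j\mapsto u_j$ into $g$. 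This rewrites $m$ as a linear combination, modulo $Id_G(A)$, of monomials in which the \emph{first} variables of the blocks $\bar w_1,\dots,\bar w_d$ — which are precisely certain of the $x_{\sigma(i_j)}^{(h_{\sigma(i_j)})}$ appearing in decreasing index order — are permuted nontrivially; each such monomial is then lexicographically smaller than $m$, since permuting a strictly decreasing subsequence of leading-block variables by any $\pi\ne\mathrm{id}$ strictly lowers the sequence.

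\textbf{Main obstacle.} The delicate point is the bookkeeping in the third step: ensuring that after applying the neutral-component identity to the grouped variables $u_1,\dots,u_d$, the resulting monomials are genuinely \emph{smaller} in the chosen order, so that the induction closes. This requires choosing the cut points and the order carefully so that the variables being permuted are exactly (a subset of) those realizing the bad descending subsequence $\sigma(i_1)>\cdots>\sigma(i_{d|G|})$, and that the leftmost block $w_0$ (the "prefix" that is untouched) is identical across all resulting monomials. One must also handle the convention $d|G|>n$ separately, in which case every permutation is declared good and the bound $(d|G|-1)^{2n}\ge n!$ is vacuous; and one should note that passing from an arbitrary (not necessarily multilinear) identity of degree $d$ to a multilinear one with nonzero coefficient on $\mathrm{id}$ does not increase the degree, so the bound is stated with the original $d$. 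The rest — counting $d|G|$-good permutations via the recalled lemma and concluding — is routine.
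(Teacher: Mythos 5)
The paper does not actually prove Theorem \ref{Riley}: it is quoted verbatim from Bahturin--Giambruno--Riley \cite{BGR} as a known result. Your reconstruction is correct and is essentially the original argument of \cite{BGR}, which is also the template the authors themselves follow in Lemma \ref{l9}: cut a $|G|d$-bad monomial at the positions of a descending subsequence, use Lemma \ref{l1} to extract $d$ consecutive blocks of neutral degree, substitute these blocks into a multilinear identity of $A_{1}$ normalized so that the identity permutation can be solved for, and check that every resulting monomial is lexicographically smaller because the first permuted block now begins with a variable of strictly smaller index; the count of $|G|d$-good permutations then gives the bound $(|G|d-1)^{2n}$. Your bookkeeping of the delicate step (identical untouched prefix, strictly decreasing leading variables of the blocks) is exactly what makes the minimal-counterexample/induction close, so there is no gap.
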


For our main goal of this section we consider one last decomposition of the vector space $V_{n,0,\dots,0,{\bf r}}$ into the subspaces
\begin{align*}
U_{n,0,\dots,0,{\bf r},u,{\bf p},{\bf q}}&=span\{y_{\sigma(1)}\dots y_{\sigma(p_{1})}z_{\tau(1)}\cdots z_{\tau(q_{1})} 
\cdots y_{\sigma(p_{1}+\cdots+ p_{u-1}+1)}\cdots y_{\sigma(r)}\times\\
&\times z_{\tau(q_{1}+\cdots+ q_{u-1}+1)}\cdots z_{\tau(n-r)}\mid 
\sigma\in S_{r},\tau\in S_{n-r}\}
\end{align*}
where $r=p_{1}+\cdots+p_{u}$ and $n-r=q_{1}+\cdots+q_{u}$ are such that $p_{1}\geq0$, $q_{u}\geq0$, and $ p_{2}$, \dots, $p_{u}$, $q_{1}$, \dots, $q_{u-1}>0$, and the homogeneous variables were written without their homogeneous degrees for simplicity.

Hence we have
\begin{equation}
\label{e2}
V_{n,0,\dots,0,{\bf r}}=\bigoplus_{u,{\bf p},{\bf q}} U_{n,0,\dots,0,{\bf r},u,{\bf p},{\bf q}}.
\end{equation}

Moreover, since the number of compositions of a given positive integer $n$ is exactly $2^{n-1}$, one can see that there exist at most $2^{r}\cdot 2^{n-r}=2^{n}$ vector spaces of the same type as  $U_{n,0,\dots,0,{\bf r},u,{\bf p},{\bf q}}$.

\begin{Lemma}\label{10}
\[
\dim\frac{V_{n,0,\dots,0,{\bf r}}} {V_{n,0,\dots,0,{\bf r}}\cap Id_{G}^{s}(A)}  
\leq 2^{n}((2d_{1}-1)o(g)-1)^{2r} (|G|d_{2}-1)^{2(n-r)}(r+1)^{n-r}
\]
\end{Lemma}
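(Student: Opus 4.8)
The plan is to bound the dimension of each summand $U_{n,0,\dots,0,{\bf r},u,{\bf p},{\bf q}}$ in the decomposition \eqref{e2} and then multiply by the number of summands, which we have already observed is at most $2^{n}$. Fix $u$, ${\bf p}=(p_{1},\dots,p_{u})$ and ${\bf q}=(q_{1},\dots,q_{u})$ with $r=p_{1}+\cdots+p_{u}$ and $n-r=q_{1}+\cdots+q_{u}$. A spanning set of $U_{n,0,\dots,0,{\bf r},u,{\bf p},{\bf q}}$ is indexed by a pair $(\sigma,\tau)\in S_{r}\times S_{n-r}$, where $\sigma$ arranges the $y$-variables inside the $u$ blocks of sizes $p_{1},\dots,p_{u}$ and $\tau$ arranges the $z$-variables inside the $u$ blocks of sizes $q_{1},\dots,q_{u}$. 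I would estimate the contribution of the $\sigma$ part and of the $\tau$ part separately, and then account for the "bookkeeping'' of how the $y$-segments are interleaved with the $z$-words.

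First, the $y$-part. By Lemma~\ref{l9}, modulo $Id_{G}^{s}(A)$ the space $V_{n,0,\dots,0,{\bf r}}$ is generated by $(2d_{1}-1)o(g)$-$y$-good monomials, so in counting a spanning set we may restrict to those $\sigma\in S_{r}$ that are $(2d_{1}-1)o(g)$-good (the $y$-good condition depends only on the order in which the $y$-variables appear, i.e.\ only on $\sigma$, not on how the $z$-words are distributed). By the Lemma bounding the number of $d$-good permutations, the number of such $\sigma$ is at most $((2d_{1}-1)o(g)-1)^{2r}$; our convention on $d$-good permutations when $d>r$ keeps this bound valid in that degenerate case too. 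Next, the $z$-part together with the interleaving. Inside a single block the $z$-variables all have the same homogeneous degree, and across blocks the arrangement of the $y$-variables is fixed; thinking of the $y$-segments as inserted into a multilinear word in the $z$'s, the relevant count is a graded multilinear codimension in $n-r$ variables. Here is where Theorem~\ref{Riley} enters: the neutral component of $C$ is PI of degree $d_{2}$, hence by Theorem~\ref{Riley} (applied with $C$ in place of $A$, and noting $Id_{G}(C)\subseteq Id_{G}^{s}(A)$ so evaluations of the $z$-variables on $C$ kill graded identities of $C$) the dimension of the span of the $z$-arrangements, modulo $Id_{G}^{s}(A)$, is at most $(|G|d_{2}-1)^{2(n-r)}$. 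Finally, each of the $r$ consecutive $y$-variables (in the order fixed by $\sigma$) is placed either immediately before a $z$-variable, at the very end, or merged into the same slot as the previous $y$-variable; more precisely each $y$ can be slotted into one of at most $(n-r)+1$ positions relative to the word in the $z$'s, contributing the factor $(r+1)^{n-r}$ — or, reorganizing the count, each of the $n-r$ gaps determined by the $z$-word receives a (possibly empty) consecutive run of $y$'s, which is again at most $(r+1)^{n-r}$ choices. Multiplying the three bounds gives
\[
\dim\frac{U_{n,0,\dots,0,{\bf r},u,{\bf p},{\bf q}}}{U_{n,0,\dots,0,{\bf r},u,{\bf p},{\bf q}}\cap Id_{G}^{s}(A)}\le ((2d_{1}-1)o(g)-1)^{2r}(|G|d_{2}-1)^{2(n-r)}(r+1)^{n-r},
\]
and summing over the at most $2^{n}$ choices of $(u,{\bf p},{\bf q})$ yields the claimed inequality.

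The main obstacle I anticipate is getting the interleaving count exactly right and making sure the three factors are genuinely independent — that is, that restricting $\sigma$ to $y$-good permutations is compatible with the separate $z$-count coming from Theorem~\ref{Riley}, and that neither count is being double-charged for the placement of the $y$-segments. The cleanest way to handle this is to first fix the positions of the $y$-segments relative to the $z$-word (the $(r+1)^{n-r}$ factor), then observe that once those positions are fixed the monomial is determined by (a) a $y$-good permutation $\sigma$ of the $r$ $y$-variables and (b) an arrangement of the $z$-variables which, modulo $Id_{G}^{s}(A)$, ranges over a space of dimension at most $(|G|d_{2}-1)^{2(n-r)}$ by Theorem~\ref{Riley}; the product of these is then a spanning set for the fixed-interleaving piece modulo $Id_{G}^{s}(A)$. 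One subtlety to state carefully: applying Theorem~\ref{Riley} requires that graded identities of $C$ lie in $Id_{G}^{s}(A)$, which was noted earlier in the section, and that the $z$-variables, being evaluated on $C$, see exactly those identities; I would make this explicit before invoking the theorem. Everything else is the routine multiplication of the three estimates and the factor $2^{n}$ for the number of summands in \eqref{e2}.
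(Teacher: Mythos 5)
Your overall strategy coincides with the paper's: decompose $V_{n,0,\dots,0,{\bf r}}$ as in (\ref{e2}), use Lemma \ref{l9} to restrict the $y$-part to $(2d_{1}-1)o(g)$-good permutations (giving the factor $((2d_{1}-1)o(g)-1)^{2r}$), use Theorem \ref{Riley} for the $z$-part, and multiply by the at most $2^{n}$ summands. However, your accounting of the last two factors has a genuine flaw. Within a fixed summand $U_{n,0,\dots,0,{\bf r},u,{\bf p},{\bf q}}$ the interleaving pattern of $y$-segments and $z$-blocks is already determined by $(u,{\bf p},{\bf q})$, so your factor $(r+1)^{n-r}$, which you justify as counting how the $y$-runs are slotted into the gaps of the $z$-word, counts nothing that varies inside $U$ — that choice is exactly what the $2^{n}$ factor for the number of summands already pays for. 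At the same time, your bound $(|G|d_{2}-1)^{2(n-r)}$ for ``the arrangement of the $z$-variables'' is not justified by Theorem \ref{Riley} as you invoke it: the $n-r$ variables $z$ do not form one contiguous multilinear word but sit in $u$ blocks of sizes $q_{1},\dots,q_{u}$ separated by $y$-segments, and the graded identities of $C$ can only be used to rewrite each contiguous block internally; they cannot move a $z$-variable past a $y$-segment into another block. Hence the correct count for the $z$-part is $\binom{n-r}{q_{1},\dots,q_{u}}$ choices of which $z$-variables populate which block, times $\prod_{i}(|G|d_{2}-1)^{2q_{i}}=(|G|d_{2}-1)^{2(n-r)}$ for the within-block reductions. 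The multinomial coefficient is missing from your argument.

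The two errors happen to cancel numerically: the paper bounds the missing multinomial coefficient by $u^{n-r}\leq(r+1)^{n-r}$, which is exactly the spurious factor you introduced for the interleaving, so the final inequality you write down is the one in the statement. But as justified, your proof both double-charges the interleaving (once in $2^{n}$, once in $(r+1)^{n-r}$) and omits the block-membership count for the $z$-variables. To repair it, drop the interleaving factor, apply Theorem \ref{Riley} separately to each of the $u$ contiguous $z$-blocks, and insert the factor $\binom{n-r}{q_{1},\dots,q_{u}}\leq u^{n-r}\leq(r+1)^{n-r}$ for distributing the $z$-variables among the blocks; the rest of your argument then goes through.
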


\begin{proof}
We start the proof by recalling that  Lemma \ref{l9} gives us a generating set for $V_{n,0,\dots,0,{\bf r}}$ modulo $Id_{G}^{s}(A)$, namely the set of all $(2d_{1}-1)o(g)$-$y^{(g)}$-good monomials. By (\ref{e2}), it is enough to count the number of $(2d_{1}-1)o(g)$-$y^{(g)}$-good monomials in $U_{n,0,\dots,0,{\bf r},u,{\bf p},{\bf q}}$. First of all note that there exist at most $((2d_{1}-1)o(g)-1)^{2r}$ dispositions of the homogeneous variables $y$ modulo $Id_{G}^{s}(A)$. We also have  $\binom{n-r}{q_{1},\dots,q_{u}}$ different manners of distributing the homogeneous variables $z$ that occur in blocks of $q_{1}$ consecutive ones, \dots, $q_{u}$ consecutive ones. By Theorem \ref{Riley}, each consecutive group of homogeneous variables $z$ can be written as a linear combination of at most $(|G|d_{2}-1)^{2q_{i}}$ monomials modulo $Id_{G}^{s}(A)$. Hence  $U_{n,0,\dots,0,{\bf r},u,{\bf p},{\bf q}}$ is generated by at most
\[ 
((2d_{1}-1)o(g)-1)^{2r}\binom{n-r}{q_{1},\dots,q_{u}}(|G|d_{2}-1)^{2q_{1}}\cdots (|G|d_{2}-1)^{2q_{u}}
\]
monomials.

Therefore the well known multinomial theorem enables us to bound the multinomial coefficient corresponding to the homogeneous variables $z$ by $u^{n-r}$ and since $u\leq r+1$ we write 
\[
\binom{n-r}{q_{1},\dots,q_{u}}\leq (r+1)^{n-r}.
\]

We finish the proof of this lemma by observing that there exist at most  $2^{n}$ vector spaces of the form  $U_{n,0,\dots,0,{\bf r},u,{\bf p},{\bf q}}$.
\end{proof}

We are ready to prove the main result of this section. We recall the following well known inequality (see \cite{FRo})

\begin{eqnarray}
\bigg(\frac{n}{e}\bigg)^{n}<\frac{\Gamma(n+1)}{\sqrt{2\pi n}}<\Gamma(n+1)=n!,
\end{eqnarray}
where $\Gamma$ stands for the Euler's gamma function, and the meaning of $e$ and $\pi$ is the obvious one.

\begin{Theorem}\label{t2}
Let $A=B+C$ be a $G$-graded algebra which is a sum of two homogeneous subalgebras. If $B$ satisfies the graded semi-identity $Sp_{d_{1}}[Y^{(g)};X^{(g^{-1})}]$ and $C_{1}$ satisfies some ordinary polynomial identity of degree $d_{2}$, then $A$ satisfies some graded multilinear polynomial identity of degree $n$ in homogeneous variables of degree $g$, where $n$ is the least integer greater or equal to $\alpha^\alpha$ such that $\alpha=8e((2d_{1}-1)o(g)-1)^{2}(|G|d_{2}-1)^{2}$. 
\end{Theorem}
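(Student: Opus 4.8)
The plan is to estimate $\dim\bigl(V_{n_1,\dots,n_k,{\bf r}}/(V_{n_1,\dots,n_k,{\bf r}}\cap Id_G^s(A))\bigr)$ in the special case at hand, namely when only one homogeneous degree is involved, so that $n_1=n$ and $n_2=\dots=n_k=0$, and then to compare it against $n!/2^n$ so that Lemma~\ref{l8} applies. Since $B$ satisfies $Sp_{d_1}[Y^{(g)};X^{(g^{-1})}]$ and $C_1$ is PI of degree $d_2$, Lemma~\ref{10} gives, for every choice of ${\bf r}=(r,0,\dots,0)$ with $0\le r\le n$,
\[
\dim\frac{V_{n,0,\dots,0,{\bf r}}}{V_{n,0,\dots,0,{\bf r}}\cap Id_G^s(A)}\le 2^n\bigl((2d_1-1)o(g)-1\bigr)^{2r}\bigl(|G|d_2-1\bigr)^{2(n-r)}(r+1)^{n-r}.
\]

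First I would bound the right-hand side uniformly in $r$. Set $c_1=(2d_1-1)o(g)-1$ and $c_2=|G|d_2-1$; crudely $c_1^{2r}\le c_1^{2n}$, $c_2^{2(n-r)}\le c_2^{2n}$, and $(r+1)^{n-r}\le (n+1)^{n}$. Actually it is cleaner to absorb the $(r+1)^{n-r}$ factor: since $o(g)\ge 2$ and $|G|\ge 1$ one can check $c_1,c_2\ge 1$, so $c_1^{2r}c_2^{2(n-r)}\le (c_1c_2)^{2n}$ wastefully but harmlessly, and $(r+1)^{n-r}\le n^n$. Hence for every ${\bf r}$,
\[
\dim\frac{V_{n,0,\dots,0,{\bf r}}}{V_{n,0,\dots,0,{\bf r}}\cap Id_G^s(A)}\le 2^n (c_1c_2)^{2n} n^n =\bigl(2(c_1c_2)^2 n\bigr)^{n}\cdot\text{(a correction that I will fold in)}.
\]
More precisely I would write the bound as $\bigl(2(c_1c_2)^2\bigr)^n n^n$ and keep track of constants carefully; with $\beta:=2(c_1c_2)^2=2\bigl((2d_1-1)o(g)-1\bigr)^2\bigl(|G|d_2-1\bigr)^2$ this is $\beta^n n^n$.

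Next I would invoke Stirling in the form $n!>(n/e)^n$ quoted just above the theorem. To apply Lemma~\ref{l8} it suffices that $\beta^n n^n < n!/2^n$, and using $n!>(n/e)^n$ it is enough to have $\beta^n n^n < (n/(2e))^n$, i.e. $2e\beta < 1$ — which is false, so the naive comparison is too lossy and I must be more careful about where the $n^n$ came from. The point is that the $n^n$ factor is spurious: the $(r+1)^{n-r}$ term is at most $n^{n-r}$, and when paired with $c_1^{2r}$ the extreme cases $r=0$ and $r=n$ show the true growth is only exponential, not of order $n^n$. So the real step is: bound $c_1^{2r}(r+1)^{n-r}$ by $\max_{0\le r\le n} c_1^{2r}(r+1)^{n-r}$, and observe that for $r\le n$ this is dominated by a clean exponential: indeed $c_1^{2r}(r+1)^{n-r}\le (\max\{c_1^2, n+1\})^n$ is still bad, but one instead notes $(r+1)^{n-r}\le (n+1)^{n-r}$ and treats $g(r)=c_1^{2r}(n+1)^{n-r}=(n+1)^n (c_1^2/(n+1))^r$, which is monotone in $r$ and hence maximized at an endpoint, giving $\le \max\{(n+1)^n,\ c_1^{2n}\}$. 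When $c_1^2\le n+1$ this equals $(n+1)^n$ and we are back where we started; the resolution used in the paper is that the generating-set count from Lemma~\ref{l9} together with the good-permutation bound $(d-1)^{2n}$ already replaces any potential $n!$ by $c_1^{2n}$, and the $(r+1)^{n-r}$ arises only from multinomial coefficients for the $z$-variables. So the honest final bound is $\beta^n$ times at most $(r+1)^{n-r}\le n^{n-r}$, and since $r$ can be as small as $0$ we genuinely get a factor like $n^n$ when $r=0$; but when $r=0$ the monomial has no $y$-variables at all, so it lies in $V$ purely in $z$-variables, and then $Sp_{d_1}$ plays no role — instead the whole monomial is a product of $z$'s of neutral...

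I will therefore structure the proof as: (i) apply Lemma~\ref{10}; (ii) bound each factor, obtaining $\dim(V_{n,0,\dots,0,{\bf r}}/\cdot)\le \bigl(8e\,c_1^{2}c_2^{2}\bigr)^{n}$ after absorbing the $2^n$ and the $(r+1)^{n-r}\le (n+1)^{n}$... no. The clean route the authors clearly intend, given that $\alpha=8e c_1^2 c_2^2$ and the threshold is $n\ge\alpha^\alpha$: bound $(r+1)^{n-r}$ by noting $r+1\le n$ and $(r+1)^{n-r}$, combined across the splitting and with the factor $2^n$ from the $U$-decomposition and the factor $2^n$ from Lemma~\ref{l8}, yields that it suffices to have $(2\cdot 2\cdot c_1^2 c_2^2)^n (r+1)^{n-r} < n!$ for all $r$, and then use $(r+1)^{n-r}\le (n!)^{1/2}$-type bounds... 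Honestly, the main obstacle is precisely this bookkeeping: showing that for $n\ge \alpha^\alpha$ with $\alpha=8e c_1^2 c_2^2$ one has $\alpha^n(r+1)^{n-r}<n!/2^n$ uniformly in $0\le r\le n$. I would handle it by the substitution $n=\alpha^\alpha$ (or larger), writing $\log$ of both sides: $\log\bigl(\alpha^n (r+1)^{n-r}2^n\bigr) = n\log(2\alpha) + (n-r)\log(r+1) \le n\log(2\alpha) + n\log n$, while $\log n! > n\log n - n = n(\log n - 1)$; so it suffices that $\log(2\alpha) + \log n < \log n - 1$... which again fails, confirming that the $n^n$-type term must be defeated structurally. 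The correct resolution, which I will present, is that the $(r+1)^{n-r}$ factor should be paired with the $c_1^{2r}$ factor before maximizing: the function $r\mapsto c_1^{2r}(r+1)^{n-r}$ on $\{0,\dots,n\}$ attains its max at $r=0$ (value $(n+1)^n$) or $r=n$ (value $c_1^{2n}$) only in degenerate regimes, and in fact for the relevant range its logarithm $2r\log c_1+(n-r)\log(r+1)$ is maximized where $2\log c_1 = \log(r+1)+ \frac{n-r}{r+1}$, pinning $r+1\approx c_1^2$, giving max value roughly $c_1^{2n}\cdot e^{n/c_1^2}\le (ec_1^2)^n$. Substituting this back, $\dim\le (2^2 \cdot e c_1^2\cdot c_2^2)^n= (4e c_1^2 c_2^2)^n\le\alpha^n$ with room to spare, and then $\alpha^n<n!/2^n$ holds as soon as $2e\alpha<n$, in particular for $n\ge\alpha^\alpha$ (indeed $\alpha^\alpha\ge 2e\alpha$ for $\alpha\ge 2$, which holds here). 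Finally, Lemma~\ref{l8} (with all $n_i$ but one equal to zero) produces the desired multilinear graded identity of degree $n$ in homogeneous variables of degree $g$, completing the proof. The hard part, to reiterate, is the estimate $\max_{0\le r\le n} c_1^{2r}(r+1)^{n-r}\le (e c_1^2)^n$ and the verification that $n\ge\alpha^\alpha$ forces $\alpha^n<n!/2^n$; everything else is assembling Lemmas~\ref{10}, \ref{l8} and Stirling.
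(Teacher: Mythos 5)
Your overall skeleton matches the paper's: combine Lemma~\ref{10} with Lemma~\ref{l8} and the bound $(n/e)^n<n!$, taking $n$ to be the least integer $\ge\alpha^{\alpha}$. But the one step you yourself single out as ``the hard part'' --- the claim that
\[
\max_{0\le r\le n} c_1^{2r}(r+1)^{n-r}\le \bigl(e\,c_1^{2}\bigr)^{n},\qquad c_1=(2d_1-1)o(g)-1,
\]
is false, and with it your final bound $\dim\bigl(V_{n,0,\dots,0,{\bf r}}/(V_{n,0,\dots,0,{\bf r}}\cap Id_G^{s}(A))\bigr)\le\alpha^{n}$ collapses. Take $c_1=1$ and $r$ of order $n/\log n$: then $(r+1)^{n-r}$ is of order $(n/(e\log n))^{n}$, which is superexponential in $n$ and eventually dwarfs $(ec_1^{2})^{n}$; already for $n=100$ and $r=9$ one has $(r+1)^{n-r}=10^{91}\gg e^{100}\approx 2.7\cdot10^{43}$. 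The calculus leading to ``$r+1\approx c_1^{2}$'' also has a sign error: the stationarity condition for $2r\log c_1+(n-r)\log(r+1)$ is $\log(r+1)=2\log c_1+\frac{n-r}{r+1}$, whose solution satisfies $r+1=c_1^{2}e^{(n-r)/(r+1)}$, far larger than $c_1^{2}$ when $r$ is small compared with $n$. Consequently your closing step, ``$\alpha^{n}<n!/2^{n}$ once $n>2e\alpha$'', is applied to a quantity that was never shown to be $\le\alpha^{n}$; the fact that your argument would only require $n>2e\alpha$ rather than $n\ge\alpha^{\alpha}$ is itself a signal that the genuine difficulty has been bypassed.

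The paper's resolution is structurally different and explains why the threshold really is $\alpha^{\alpha}$: one does \emph{not} bound the dimension by an exponential in $n$. Instead, for $r\ne0$ one replaces $(r+1)^{n-r}$ by $2^{n}r^{n-r}$ and proves the superexponential inequality $\alpha^{n}r^{n-r}\le n^{n}$ uniformly in $r$, by splitting into the two cases $r\le n/\alpha$ (where $\alpha^{n}r^{n-r}\le(\alpha r)^{n}\le n^{n}$) and $n/\alpha<r<n$ (where minimality of $n\ge\alpha^{\alpha}$ forces $r<\alpha^{\alpha}$, and writing $\alpha r=n+v$ one expands $(\alpha^{\alpha})^{n}$ to dominate $\alpha^{n}r^{n-r}$); the case $r=0$ is treated separately. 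Since $n^{n}/(2e)^{n}<n!/2^{n}$, Lemma~\ref{l8} then applies. To repair your write-up, replace the maximization of $c_1^{2r}(r+1)^{n-r}$ by this two-case comparison against $n^{n}$.
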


\begin{proof}
Lemma \ref{l8} reduces the existence of the required graded polynomial identity to the existence of a positive integer $n\in\mathbb{N}$ such that 
\begin{eqnarray}\nonumber 
\dim\frac{V_{n,0,\dots,0,{\bf r}}}{V_{n,0,\dots,0,{\bf r}}\cap Id_{G}^{s}(A)} <\frac{n!}{2^{n}},
\end{eqnarray}
for every ${\bf r}$.

We begin by assuming that $r\neq0$ and we will show that there exists $n\in\mathbb{N}$ (which does not depend on $r$) such that 
\begin{eqnarray}\nonumber
8^{n}e^{n}((2d_{1}-1)o(g)-1)^{2n}(|G|d_{2}-1)^{2n}r^{n-r}\leq n^{n}
\end{eqnarray}

Recall $\alpha=8e((2d_{1}-1)o(g)-1)^{2}(|G|d_{2}-1)^{2}$. Take $n\in\mathbb{N}$ as the least integer satisfying  $n\geq\alpha^\alpha$, we claim that  $\alpha^{n}r^{n-r}\leq n^{n}$. We can consider $r<n$, since for $r=n$ the inequality $\alpha^{n}\leq n^{n}$ follows from $\alpha<n$. We consider two cases now.

\begin{enumerate}
\item[\textbf{Case 1:}] $\displaystyle r\leq \frac{n}{\alpha}$.

In this case we have $\alpha r\leq n$ and hence $\alpha^{n}r^{n-r}\leq (\alpha r)^{n}\leq n^{n}$. 

\item[\textbf{Case 2:}] $\displaystyle \frac{n}{\alpha}<r< n$.

In this second case we have $n<\alpha r$, that is, there exists a positive real number $v$ such that $\alpha r=n+v$. The minimality of $n$ leads us to $r<\alpha^{\alpha}$. We also note that $\alpha^{v}>1$. Therefore,

\begin{align*}
n^{n}&\geq (\alpha^{\alpha})^{n}=\alpha^{n}(\alpha^{\alpha-1})^{n-r}(\alpha^{\alpha-1})^{r}=\alpha^{n}(\alpha^{\alpha-1})^{n-r}\alpha^{\alpha r -r}\\
&=\alpha^{n}(\alpha^{\alpha-1})^{n-r}\alpha^{n-r+v}
=\alpha^{n}(\alpha^{\alpha})^{n-r}\alpha^{v}\\
&>\alpha^{n}r^{n-r}
\end{align*}
\end{enumerate}

Since $r\neq0$, we have $r+1\leq 2r$ and hence
\begin{align*}
&4^{n}e^{n}((2d_{1}-1)o(g)-1)^{2r}(|G|d_{2}-1)^{2(n-r)}(r+1)^{n-r}\\
\leq & 8^{n}e^{n}((2d_{1}-1)o(g)-1)^{2n}(|G|d_{2}-1)^{2n}r^{n-r}\\
\leq & n^n
\end{align*}
which implies in turn that
\begin{align*}
2^{n}((2d_{1}-1)o(g)-1)^{2r}(|G|d_{2}-1)^{2(n-r)}(r+1)^{n-r}\leq \frac{n^n}{e^n 2^n}<\frac{n!}{2^n}.
\end{align*}

The only case left is when $r=0$; we note that for any positive integer $n\geq 4e(|G|d_{2}-1)^{2}$ (in particular for the same $n$ chosen in the case $r\neq0$) we have $4^n e^n (|G|d_{2}-1)^{2n}\leq n^{n}$,
and then it follows that
\[
2^n (|G|d_{2}-1)^{2n}\leq \frac{n^n}{e^n 2^n}<\frac{n!}{2^n}
\]
Now it is enough to apply Lemma \ref{10} in order to finish the proof.
\end{proof}

\begin{Remark}
If we take $D$, $S_{1}$, $S_2$ as defined in Example \ref{example nontrivial semi} for $D$ given by the free associative algebra $F\langle X \rangle$ without $1$, then $A$ does not satisfy an identity in variables of homogeneous degree $1$, but satisfies a graded semi-identity of the type $Sp_{d}$. However, this does not contradict our last theorem, since clearly $C$ is not PI. 
\end{Remark}

\section{\bf A final remark}

We finish this paper with a final remark about the involutive version of Question \ref{question}. In what follows we recall some basic facts about (identities with) involutions on rings.   

\begin{Definition}
An {\it involution} on an associative ring $R$ is an antiautomorphism $*: R\rightarrow R$ of order two.
\end{Definition}

We consider two sets of variables $X=\{x_{1},x_{2},\dots,\}$ and $X^{*}=\{x_{1}^{*},x_{2}^{*},\dots\}$, then we form the free ring $\mathbb{Z}\langle X, X^{*} \rangle$. An involution (which will be denoted also by $*$) on $\mathbb{Z}\langle X,X^{*} \rangle$ can be defined setting $(x_{i})^{*}=x_{i}^{*}$ and $(x_{i}^{*})^{*}=x_{i}$. Hence we can easily see that a polynomial  $f\in\mathbb{Z}\langle X,X^{*} \rangle$ can be written as 

\begin{eqnarray}\label{invol}
f(x_{1},\dots,x_{n};x_{1}^{*},\dots,x_{n}^{*})=\sum_{i,j}\alpha_{i,j}x_{i_{1}}^{\epsilon_{i_{1}}}\cdots x_{i_{j}}^{\epsilon_{i_{j}}}, \quad \alpha_{i,j}\in\mathbb{Z}, \quad \epsilon_{i_{k}}\in\{1,*\}.
\end{eqnarray}

\begin{Definition}
Let $f\in\mathbb{Z}\langle X,X^{*}\rangle$ be a monic polynomial. We say that $f=0$ is a {\it $*$-polynomial identity} for $(R,*)$ if $f(r_{1},\dots,r_{n},r_{1}^{*},\dots,r_{n}^{*})=0$ for every $r_{1}$, \dots, $r_{n}\in R$. In this case $R$ is called a {\it $*$-PI ring}.
\end{Definition}

Taking $\epsilon_{i_{k}}=1$ in (\ref{invol}) we have that every PI ring with an involution $*$ is also a $*$-PI ring. Thanks to Amitsur's theorem (see \cite{Ami}), the converse also holds.

\begin{Theorem}
If $(R,*)$ satisfies some $*$-polynomial identity, then $R$ is a PI ring.
\end{Theorem}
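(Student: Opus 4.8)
The plan is to reduce the statement to the celebrated theorem of Amitsur on rings with generalized polynomial identities, or more precisely to the classical Amitsur theorem asserting that a ring satisfying a proper $*$-identity satisfies an ordinary polynomial identity; since the latter is exactly what we are citing, the task is really to present the standard argument by which one manufactures an ordinary identity out of a $*$-identity. First I would pass to the free product construction: embed $R$ into its ``$*$-doubled'' setting by considering the polynomial ring in variables $x_i$ and $x_i^*$, and recall that a $*$-identity $f$ of $(R,*)$ is a nonzero element of $\mathbb{Z}\langle X, X^*\rangle$ vanishing under all substitutions $x_i\mapsto r_i$, $x_i^*\mapsto r_i^*$.

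The key step is a substitution trick that eliminates the starred variables. Given the $*$-identity
\[
f(x_{1},\dots,x_{n};x_{1}^{*},\dots,x_{n}^{*})=0,
\]
I would substitute for each $x_i$ a generic sum of products built from fresh free variables together with their images, exploiting that on a ring with involution the element $r+r^*$ is symmetric and $r-r^*$ is skew; alternatively, one substitutes $x_i\mapsto u_i v_i$ in a way that forces $x_i^*\mapsto v_i^* u_i^*$, and then iterates so that after finitely many steps every occurrence of a starred variable can be absorbed. The cleanest route, and the one I would actually write, is this: the map $r\mapsto r^*$ is additive, so the $2n$ elements $r_1,\dots,r_n,r_1^*,\dots,r_n^*$ can be treated as $2n$ independent ring elements subject only to the single algebraic relation that applying $*$ twice is the identity; hence $f=0$, read as a relation in $\mathbb{Z}\langle X, X^*\rangle$ modulo the ideal generated by that relation, still has a nonzero multilinear consequence $g$ in which the variables $x_i^*$ have been renamed to independent variables, and $g=0$ holds for \emph{some} substitution — and a standard density/genericity argument upgrades ``some'' to ``all'', yielding an ordinary polynomial identity for $R$.

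The main obstacle, and the part that requires care rather than routine bookkeeping, is ensuring that the polynomial one produces after these substitutions is genuinely nonzero — i.e. that the $*$-identity does not collapse to the trivial identity once the starred variables are made independent. This is precisely the content of Amitsur's theorem and is not elementary: it rests on the structure theory of primitive rings with a proper identity and on the fact that a nonzero $*$-polynomial cannot vanish identically on a sufficiently large primitive $*$-ring. Accordingly, the honest proof is simply: invoke \cite{Ami}. If a self-contained sketch is wanted, I would reduce to a primitive ring via the $*$-radical, note that a primitive ring with involution either is itself primitive with a proper identity (done by Kaplansky's theorem once we have an ordinary identity) or is an involution of the form $x\mapsto$ (exchange) on $S\oplus S^{\mathrm{op}}$, in which case projecting to the first component turns the $*$-identity into an ordinary identity of $S$; patching these cases is where all the work lies.

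Since the theorem is quoted from \cite{Ami} and is used here only as a closing remark rather than proved afresh, I would keep the exposition to a single sentence pointing to Amitsur's original paper, possibly adding the observation that, combined with the earlier sections, this shows any $*$-graded analogue of Question \ref{question} inherits whatever pathologies the graded case exhibits; a fully detailed proof of Amitsur's theorem would be out of scope for this paper.
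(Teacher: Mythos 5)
Your proposal matches the paper exactly: the theorem is quoted as Amitsur's theorem and the paper offers no proof beyond the citation to \cite{Ami}, which is precisely what you recommend doing. The extra sketch you include is inessential here (and its middle ``substitution trick'' paragraph is too vague to stand alone as a proof), but since you correctly defer to Amitsur's original paper, your treatment coincides with the paper's.
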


Now we combine Amitsur's and Kepczyk's theorems to get the following result.

\begin{Theorem}
Let $(R,*)$ be a ring with an involution. Assume that $(R,*)=(R_{1},*_{1})+(R_{2},*_{2})$ is a sum of two subrings with involutions $*_{1}$ and $*_{2}$, respectively. If $R_{i}$ is an $*_{i}$-PI ring for $i=1$, 2, then $R$ is a $*$-PI ring.
\end{Theorem}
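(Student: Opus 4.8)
The plan is to reduce the involutive statement entirely to the non-involutive theorem of K\c{e}pczyk, using Amitsur's theorem as a bridge. The guiding observation is that the involutions $*_1$ and $*_2$ on $R_1$ and $R_2$ serve only to guarantee that these subrings carry \emph{ordinary} polynomial identities; once that is established, the involution $*$ on $R$ plays no further role.

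First I would apply Amitsur's theorem to each pair $(R_i,*_i)$: since $R_i$ satisfies a $*_i$-polynomial identity, $R_i$ is a PI ring in the ordinary sense, for $i=1$, $2$. Hence $R=R_1+R_2$ is a sum of two (ordinary) PI subrings, and K\c{e}pczyk's theorem \cite[Theorem 5]{Kep} applies to yield that $R$ itself is an ordinary PI ring; say $R$ satisfies the monic identity $p(x_1,\dots,x_n)=0$ with $p\in\Z\langle X\rangle$. Finally, regarding $p$ as an element of $\Z\langle X,X^*\rangle$ in which every monomial has all exponents $\epsilon_{i_k}=1$ (as in \eqref{invol}), the substitution $x_i\mapsto r_i$ already forces $p=0$ no matter what the starred variables are evaluated to, so $p=0$ is a $*$-polynomial identity for $(R,*)$. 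Thus $(R,*)$ is a $*$-PI ring, which is precisely the observation recorded just after Amitsur's theorem above.

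I do not expect a genuine obstacle here; the argument is a short chaining of the two cited theorems. The only points that require care are, first, that $R_1$ and $R_2$ are merely subrings of $R$ and need not be ideals, so one must invoke the full strength of K\c{e}pczyk's theorem rather than the ideal-plus-subalgebra corollary of Section~3; and second, that the involutions $*_1$, $*_2$ are a priori unrelated to the restriction of $*$, which is harmless because they are used only inside Amitsur's theorem and are discarded immediately afterwards.
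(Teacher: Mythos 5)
Your proposal is correct and follows exactly the route the paper intends: the paper's entire justification is the sentence ``we combine Amitsur's and Kepczyk's theorems,'' i.e.\ Amitsur gives an ordinary PI for each $R_i$, K\c{e}pczyk gives an ordinary PI for $R=R_1+R_2$, and an ordinary (monic) identity is trivially a $*$-identity. Your two cautionary remarks (needing the full subring version of K\c{e}pczyk, and the irrelevance of how $*_1,*_2$ relate to $*$) are accurate and make the argument more explicit than the paper's.
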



\end{document}